\newtheorem{thm}{Theorem}
\newtheorem{cor}{Corollary}
\newtheorem{exam}{Example}
\begin{document}
\title{Uniform Quadratic Optimization and Extensions
\thanks{This research was supported by National Natural Science Foundation of China under grant
11471325, by Beijing Higher Education Young Elite Teacher Project 29201442, and by the fund of State Key Laboratory of Software Development Environment under grants SKLSDE-2013ZX-13.}
}

\titlerunning{Uniform Quadratic Optimization and Extensions }        

\author{ Shu Wang  \and  Yong Xia}


\institute{
S. Wang \and Y. Xia  \at
             State Key Laboratory of Software Development
              Environment, LMIB of the Ministry of Education,
              School of
Mathematics and System Sciences, Beihang University, Beijing,
100191, P. R. China
              \email{wangshu.0130@163.com; ~dearyxia@gmail.com }
 }

\date{Received: date / Accepted: date}

\maketitle

\begin{abstract}
The uniform quadratic optimizatin problem (UQ) is a nonconvex quadratic
constrained quadratic programming (QCQP) sharing the same Hessian matrix. Based on the second-order cone programming (SOCP) relaxation, we establish a new
sufficient condition to guarantee strong duality for (UQ) and
then extend it to (QCQP), which not only covers several well-known results in literature but also partially gives answers to a few open questions. For convex constrained nonconvex (UQ), we propose an improved approximation algorithm based on (SOCP). Our approximation bound is  dimensional independent. As an application, we establish the first approximation bound for the problem of finding the Chebyshev center of the intersection of several balls.

 \keywords{Uniform Quadratic Optimization \and Quadratic Constrained Quadratic Programming \and Strong Duality \and Approximation Algorithm}
\subclass{90C20, 90C22, 90C26}
\end{abstract}

\section{Introduction}
We study  the nonconvex uniform quadratic optimization problem:
\begin{eqnarray*}
{\rm (UQ)}~~&\max_{x\in \Bbb R^n}& f_{0}(x)\\
&{\rm s.~t.}& l_{i}\leq f_{i}(x)\leq u_{i},~i=1,\ldots,p,
\end{eqnarray*}
where $-\infty\leq l_{i}\leq u_{i}\leq +\infty$ and the quadratic functions $f_{i}(x)$ are defined by
\[
f_{i}(x)=x^TQx+2b_{i}^Tx+d_i,~i=0,1,\ldots,p,
\]
with $Q$ being a real symmetric  matrix, $b_{i}\in \Bbb R^n$ and $d_i\in \Bbb R$ for $i=0,1,\ldots,p$. Throughout this paper, we denote by (UQ$_+$) the widely used special case of (UQ) where $Q\succ0$ (a notation standing for that $Q$ is positive definite).

The problem (UQ$_+$) was first introduced by Beck \cite{Be07}
to find the smallest ball enclosing a given intersection of balls, or equivalently, find the Chebyshev center of the intersection of given balls, which can be reformulated as the following min-max problem:
\begin{equation}
{\rm (CC)}~~\min_{z}\max_{x\in \Omega}\|x-z\|^2=\mathop{\min_{z}}\left\{\max_{x\in \Omega}\left\{\|x\|^2-2x^Tz\right\}+\|z\|^2\right\}, \label{Cheyb}
\end{equation}
where $\Omega=\{x\in \Bbb R^n: \|x-a_{i}\|^2\leq r_{i}^2, i=1,\ldots, p\}$, $\|\cdot\|$ denotes the Euclidean norm, and the inner maximization problem is a special case of (UQ) with $Q$ being the identity matrix.
Moreover, we note that any NP-hard binary integer linear programming problem
\begin{eqnarray*}
&\max&~ c^Tx\\
&{\rm s.~t.}& a_i^Tx\leq b_i,~i=1,\ldots,m,\\
&& x\in \{0,1\}^n,
\end{eqnarray*}
where $c\in \Bbb R^n$, $a_i\in \Bbb R^{n}$ and $b_i\in \Bbb R$ for $i=1,\ldots,m$, can be reformulated as the following special case of (UQ$_+$):
\begin{eqnarray*}
&\max&x^Tx+(c-e)^Tx\\
&{\rm s.~t.}& x^Tx+(a_i-e)^Tx\leq b_i,~i=1,\ldots,m,\\
&& 0\le x^Tx-e^Tx\le 0,\\
&& 0\leq x^Tx+(e_{j}-e)^Tx\leq 1,~j=1,\ldots,n,
\end{eqnarray*}
where $e\in \Bbb R^n$ is the vector of all ones and $e_{j}\in \Bbb R^n$ is the $j$-th column of the identity matrix. Since the $0$-$1$ Knapsack problem is already NP-hard, we see that the problem (UQ$_+$) remains NP-hard even when
 \begin{equation}
p\geq n+2. \label{as:0}
\end{equation}

Lagrangian duality plays a fundamental role in optimization, especially in the quadratic constrained quadratic programming (QCQP) problem. It provides an upper bound of the primal maximization problem, i.e., weak duality holds \cite{DB}. It is a common sense that strong duality holds for convex optimization under Slater condition, that is, the difference between
the optimal values of the primal and dual problems is zero \cite{DB}. Though, it fails for the general non-convex programs, strong duality may hold for some nonconvex (QCQP),  see for example, \cite{Ben14,Ben96,Bur13,Poly,Pong,WX,XWS,YZ} and references therein.

For the problem (UQ$_+$), Beck \cite{Be07} showed that the strong duality holds as long as
\begin{equation}
p\leq n-1.\label{as:1}
\end{equation}
Replacing the inner maximization problem of (\ref{Cheyb}) with its Lagrangian dual minimization problem, Beck \cite{Be07} established the convex relaxation of (\ref{Cheyb}). Under the above Assumption (\ref{as:1}), the convex relaxation is tight and hence the min-max problem (\ref{Cheyb}) is globally solved.
Later, Beck \cite{Be} further relaxed the above assumption (\ref{as:1}) to
\begin{equation}
p\leq n,\label{as:2}
\end{equation}
and then proved that the strong duality holds for the problem (UQ$_+$) if for each $i=1,\ldots,p$, exactly one of the following three cases occurs:
\begin{equation}
{\rm(i)}~ l_i=-\infty, ~{\rm(ii)}~ l_i=u_i, ~{\rm(iii)}~  u_i=+\infty. \label{pn}
\end{equation}
Consequently, the min-max problem (\ref{Cheyb}) is globally solved by Beck's convex relaxation under the weaker assumption (\ref{as:2}). However, when $p>n$, Beck's convex relaxation is no longer tight, and, to our knowledge,
the quality of Beck's bound is still unknown.

In this paper, we will make a thorough study on the problem (UQ). Our main contributions can be divided into the following three parts.

\textbf{(i)} We pay a revisit  to  the problem (UQ$_+$) without assuming exactly one of the three cases (\ref{pn}) holds (see Section 2).
Differently from the semidefinite programming (SDP) reformulations \cite{Be07,Be}, we establish the following second-order cone programming (SOCP) relaxation:
\begin{eqnarray}
{\rm (SOCP)}~~&\max_{x\in \Bbb R^n}& t+2b_{0}^Tx+d_{0} \label{RQ} \\
&{\rm s.~t.}& l_{i}\leq t+2b_{i}^Tx+d_{i}\leq u_{i},~i=1,\ldots,p,\nonumber\\
&& \left\|\left(\begin{array}{c}Q^{\frac{1}{2}}x\\ \frac{t-1}{2}\end{array}\right)\right\| \le  \frac{t+1}{2}
\label{soc}
\end{eqnarray}
and show that it provides an upper bound as tight as the primal SDP relaxation.
Using a simple and easy-to-understand proof, we show the equivalence between (UQ$_+$) and (SOCP) under the following assumption
\begin{equation}
{\rm either}~ {\rm rank}\left[b_{1},\ldots,b_{p}\right]\leq n-1~{\rm or}~ p=n,
\label{as:3}
\end{equation}
which is slightly weaker than Assumption (\ref{as:2}).
Besides, under the additional primal Slater assumption, we establish the strong duality for (UQ$_+$) and its Lagrangian dual.

\textbf{(ii)}We extend the above SOCP reformulation approach from (UQ$_+$) to the general non-convex (QCQP) (see Section 3). More precisely, we establish a new sufficient condition under which (QCQP) is hidden convex, i.e., it is equivalent to a convex programming problem.
As applications, we show that our new sufficient condition not only generalizes a few existing results for special (QCQP) but also partially gives answers to a few open questions in literature.

Our first corollary is that the trust region subproblem (TRS) \cite{C00} enjoys the strong duality, which is a well-known result \cite{F96,F04,RW}. Moreover, our new convex reformulation brings a new look at the hidden convexity of (TRS). Our sufficient condition for the strong duality of the extended trust region problem with linear inequality constraints coincides with Hsia and Sheu's condition \cite{HS} and improves Jeyakumar and Li's condition \cite{JL}.
As a further extension, we establish a new sufficient condition for the hidden convexity of variants of the extended trust region subproblem with not only linear constraints but also two-sided ball constraints  \cite{Bi14}.

For the weighted maximin dispersion problem \cite{Hai,Hai2}, our sufficient condition  answers the open question \cite{Hai} under what condition the corresponding semidefinite programming relaxation is tight.

As another application, we obtain the first sufficient condition to guarantee the hidden convexity of the general problem (UQ) without assuming $Q\succ 0$.

Finally, we consider the extended $p$-regularized subproblem ($p$-RS) \cite{G81,NP06,W07,C11} with additional linear constraints. A class of polynomial solvable cases of the extended ($p$-RS) has been studied in \cite{HSY} where $p=4$ and the number of linear constraints is fixed as a constant. The case $p\neq 4$ remains unknown \cite{HSY}.
Our sufficient condition identifies a class of polynomially solved cases of the extended ($p$-RS) with any $p>2$.

\textbf{(iii)}We propose an improved 
approximation algorithm for convex-constrained (UQ$_+$), i.e., $l_i=-\infty$ and $u_i<+\infty$ for $i=1,\ldots,p$ (see Section 4).
Actually, for the nonconvex quadratic optimization problem with ellipsoid constraints:
\begin{eqnarray*}
({\rm ECQP})~~&\max_{x\in \Bbb R^n}& f(x)= x^TAx+2b^Tx \\
&{\rm s.~t.}&\|F^kx+g^k\|^2\le 1,~k=1,\ldots,p,\nonumber
\end{eqnarray*}
where $A\in\Bbb {R}^{n\times n}$ is symmetric, $b\in \Bbb {R}^n$, $F^k\in\Bbb {R}^{r^{k}\times n}$,   $g^k\in \Bbb {R}^{r^{k}}$ and $\|g^{k}\|<1$ for $k=1,\ldots,p$,
Tseng \cite{Tseng} proposed an approximation algorithm based on the SDP relaxation to find a feasible solution $x$ of (ECQP) in polynomial time such that
\begin{equation}
f(x)\geq \left(\frac{1-\gamma}{\sqrt{p}+\gamma}\right)^2\cdot v({\rm SDP}),\label{tsb}
\end{equation}
where $\gamma:=\max_{k=1,\ldots,p}\|g^k\|$ and $v(\cdot)$ denotes the optimal value of problem ($\cdot$).
Very recently, Hsia et al. \cite{XWX} improved the approximation bound (\ref{tsb}) to
\begin{equation}
f(x)\geq \left(\frac{1-\gamma}{\sqrt{\widetilde{r}}+\gamma}\right)^2\cdot v({\rm SDP}),\label{x}
\end{equation}
where $\widetilde{r}=\min\bigg\{\left\lceil\frac{ \sqrt{8p+17}-3}{2}\right\rceil,n+1 \bigg\}$. Trivially, this approximation ratio (\ref{x}) holds for the convex-constrained (UQ$_+$), as it is a special case of (ECQP).
In this paper, based on our second-order cone programming relaxation,
we propose a new approximation algorithm, which finds a feasible solution $x$ in polynomial time such that
\begin{equation}
f(x)\geq \left(\frac{1-\widetilde{\gamma}}{\sqrt{2}+\widetilde{\gamma}}\right)^2\cdot v({\rm SOCP}),\label{new}
\end{equation}
where $\widetilde{\gamma}:=\max_{i=1,\ldots,p}\frac{\|Q^{-\frac{1}{2}}b_{i}\|}{\sqrt{u_{i}-d_{i}+\|Q^{-\frac{1}{2}}b_{i}\|^2}}$.
We notice that the approximation bound (\ref{new}) greatly improves (\ref{x}) as we can show that $v({\rm SOCP})=v({\rm SDP})$. It should be noted that, though this new approximation bound relies on the input data, it is independent of the numbers $p$ and $n$.

\textbf{(iv)}We extend the above new-developed approximation analysis for the convex-constrained (UQ$_+$) to the problem of finding the smallest ball enclosing a given intersection of balls, i.e.,
the min-max problem (\ref{Cheyb}) (see Section 5).  Replacing the inner maximization problem with its Lagrangian dual relaxation,
Beck proposed an efficient convex quadratic relaxation \cite{Be}, which globally solves (\ref{Cheyb}) when $p\le n$. But it could fail to find the global minimizer for the hard case $p>n$. To our knowledge, the quality of Beck's convex quadratic relaxation remain unknown as well as the quality of the feasible solution returned by Beck's approach. In this paper, we succeed in establishing the first approximation analysis. We remark that our approximation bound dependents only  on the distribution of the given  balls rather than  $p$ and $n$.

Throughout the paper, $v(\cdot)$ stands for the optimal value of problem $(\cdot)$.
Let $\Bbb R^n$ denote the $n$-dimensional vector space. Let $I_{r}$ the $r\times r$ identity matrix. For a matrix $Q$,  denote by $\mathcal{N}(Q)$ and $\mathcal{R}(Q)$ the null and range space of $Q$, respectively.
$\lambda_{\min}(Q)$ and $\lambda_{\max}(Q)$ stand for  the smallest and largest eigenvalues of $Q$, respectively. $Q\succ(\succeq)0$ means that $Q$ is positive (semi)definite. For $Q\succeq0$, denote by $Q^{\frac{1}{2}}$ the square root of $Q$, i.e., $Q=Q^{\frac{1}{2}}Q^{\frac{1}{2}}$. Moreover, if $Q\succ0$, $Q^{-\frac{1}{2}}$ is the inverse of $Q^{\frac{1}{2}}$.
The inner product of two matrices $A$ and $B$ is denoted by $A\bullet B={\rm Tr}(AB^T)=\sum_{i=1}^n\sum_{j=1}^na_{ij}b_{ij}$.
$\|\cdot\|$ denotes the Euclidean norm, i.e., $\|x\|=\sqrt{x^Tx}$. For a set $\Omega\subseteq\Bbb R^n$,
int$(\Omega)$ denotes the set of all the interior points in $\Omega$.
The linear subspace generated by $\{b_{1},\ldots,b_{p}\}$ is denoted by ${\rm span}\{b_{1},\ldots,b_{p}\}$. ${\rm dim}\{M\}$ denotes the dimension of the
linear subspace $M$.

\section{An SOCP Relaxation and Strong Duality for (UQ$_+$)}

In this section, we present a second-order cone programming relaxation (SOCP) for (UQ$_+$). It is a common sense that solving an SOCP is much easier than SDP.  Theoretically, though it is equivalent to the SDP relaxation, (SOCP) not only provides a simple and easy way to characterize the strong duality of (UQ$_+$), but also implies a slightly weaker sufficient condition for the strong duality. 

\subsection{SDP Relaxations}
We first consider the Lagrangian dual problem of (UQ$_+$).
Introducing $p$ free Lagrange multipliers $\lambda_1,\ldots,\lambda_p$ yields the Lagrangian function of (UQ$_+$):

\begin{eqnarray*}
L(x,\lambda)&=&f_0(x)+\sum_{i=1}^p\lambda_i^{+}(u_i-f_i(x))+\sum_{i=1}^p\lambda_i^{-}(f_i(x)-l_i)\\
&=&\left(1-\sum_{i=1}^p \lambda_{i}\right)x^TQx+
2\left(b_{0}-\sum_{i=1}^p \lambda_{i} b_{i}\right)^Tx-\sum_{i=1}^p\lambda_{i}d_i\\
&&+\sum_{i=1}^p(\lambda_{i}^{+}u_{i}-\lambda_{i}^{-}l_{i})
+d_{0},
\end{eqnarray*}
where $\lambda_{i}^{+}=\max\{\lambda_{i},0\}$, $\lambda_{i}^{-}=-\min\{\lambda_{i},0\}$, and note that $\lambda_{i}=\lambda_{i}^{+}-\lambda_{i}^{-}$ for $i=1,\ldots,p$.
Then the Lagrangian dual problem of (UQ$_+$) can be written as
\begin{eqnarray*}
{\rm (D)}~~\inf_{\lambda\in\Bbb R^p}~\left\{d(\lambda):=\sup_{x\in \Bbb R^n}L(x,\lambda)\right\}.
\end{eqnarray*}
By using Shor's relaxation scheme \cite{NS}, the dual problem (D) can be reformulated as the dual semidefinite programming (SDP) relaxation for (UQ$_+$):
\begin{eqnarray*}
{\rm (D\text{-}SDP)}~~& \inf & \tau+\sum_{i=1}^p(\lambda_{i}^{+}u_{i}-\lambda_{i}^{-}l_{i})-\sum_{i=1}^p\lambda_id_i+d_0\\
&{\rm s.~t.}&
\sum_{i=1}^p\lambda_{i}\left(\begin{array}{cc}Q&b_{i}\\b_{i}^T&d_{i}
\end{array}\right)-\left(\begin{array}{cc}Q&b_{0}\\b_{0}^T&d_{0}
\end{array}\right)+\left(\begin{array}{cc}0&0\\0&\tau
\end{array}\right)\succeq 0,\\
&&             \tau\in \Bbb R,~\lambda_i\in\Bbb R,~i=1,\ldots,p.
\end{eqnarray*}

The primal SDP relaxation for (UQ$_+$) can be directly obtained by
lifting $x\in \Bbb R^n$ to $Y=xx^T\in\Bbb R^{n\times n}$ and then relaxing $Y=xx^T$ to $Y\succeq xx^T$ (see \cite{Be07}):
\begin{eqnarray}
{\rm (P\text{-}SDP_1)}~~&\max &  \left(\begin{array}{cc}Q&b_{0}\\b_{0}^T&d_{0}
\end{array}\right)\bullet \left(\begin{array}{cc}Y&x\\x^T&1
\end{array}\right) \label{PSDP}\\
&{\rm s.~t.}& l_{i} \leq  \left(\begin{array}{cc}Q&b_{i}\\b_{i}^T&d_{i}
\end{array}\right)\bullet \left(\begin{array}{cc}Y&x\\x^T&1
\end{array}\right) \leq u_{i},~i=1,\ldots,p,\nonumber\\
&& \left(\begin{array}{cc}Y&x\\x^T&1
\end{array}\right)\succeq 0.\nonumber 
\end{eqnarray}
It is similar to verify that (D-SDP) is also the conic dual problem of (P-SDP$_1$).
Since $Q\succ 0$, there is a strictly feasible solution for (D-SDP). If we further assume that the Slater condition holds for (UQ$_+$), then (P-SDP$_1$) also has a strictly feasible solution. Therefore, strong duality holds for  (D-SDP) and (P-SDP$_1$), i.e., $v$(D-SDP)$=v$(P-SDP$_1$) (see, e.g., \cite{V}).

By introducing the invertible transformation $y=Q^{\frac{1}{2}}x$ and $c_{i}=Q^{-\frac{1}{2}}b_{i}$ for $i=0,1,\ldots,p$, we reformulate (UQ$_+$) as
\begin{eqnarray*}
{\rm (UQ_0)}~~&\max_{y\in \Bbb R^n}& y^Ty+2c_{0}^Ty+d_{0}\\
&{\rm s.~t.}& l_{i}-d_{i}\leq y^Ty+2c_{i}^Ty\leq u_{i}-d_{i},~i=1,\ldots,p.
\end{eqnarray*}
Lifting $y\in \Bbb R^n$ to $\left(\begin{array}{c}I_n\\y^T
\end{array}\right)\left(\begin{array}{cc}I_n&y
\end{array}\right)\in\Bbb R^{(n+1)\times (n+1)}$ and then relaxing $y^Ty$ to $z\in \Bbb R$, we obtain a new SDP relaxation for (UQ$_0$) \cite{Be}:
\begin{eqnarray*}
{\rm (P\text{-}SDP_2)}~~&\max & \left(\begin{array}{cc}0_{n\times n}&c_{0}\\c_{0}^T&1
\end{array}\right)\bullet \left(\begin{array}{cc}I_n&y\\y^T&z
\end{array}\right)+d_{0}\\
&{\rm s.~t.}& l_{i}-d_{i} \leq \left(\begin{array}{cc}0_{n\times n}&c_{i}\\c_{i}^T&1
\end{array}\right)\bullet \left(\begin{array}{cc}I_n&y\\y^T&z
\end{array}\right)\leq u_{i}-d_{i},~i=1,\ldots,p,\\
&& \left(\begin{array}{cc}I_n&y\\y^T&z
\end{array}\right)\succeq 0.
\end{eqnarray*}
Beck \cite{Be07,Be} used (P-SDP$_1$) and (P-SDP$_2$) to prove the strong duality under Assumptions (\ref{as:0}) and  (\ref{as:1}), respectively.

\subsection{An SOCP Relaxation}
To simplify the presentation, we first assume $d_{i}=0$ for $i=0,1,\ldots,p$. Otherwise, for $i=1,\ldots,p$, we replace $u_{i}$ and $l_{i}$ with $u_{i}-d_{i}$ and $l_{i}-d_{i}$, respectively. We also assume $Q=I_n$ as it has been made in (UQ$_0$).
Then, (UQ$_+$) is reduced to the following formulation:
\begin{eqnarray*}
{\rm (U)}~~&\max_{x\in \Bbb R^n}& f_0(x)=x^Tx+b_{0}^Tx\\
&{\rm s.~t.}& l_{i}\leq x^Tx+2b_{i}^Tx\leq u_{i},~i=1,\ldots,p.
\end{eqnarray*}
The next result is the main theorem of the section.
\begin{thm}\label{thm1}
Under the Assumption (\ref{as:3}),
the problem ${\rm(U)}$ is equivalent to
the following SOCP relaxation:
\begin{eqnarray}
{\rm (S)}~~&\max_{x\in \Bbb R^n}& t+b_{0}^Tx\nonumber\\
&{\rm s.~t.}& l_{i}\leq t+2b_{i}^Tx\leq u_{i},~i=1,\ldots,p,\label{r11}\\
&& 
\left\|\left(\begin{array}{c} x\\ \frac{t-1}{2}\end{array}\right)\right\| \le  \frac{t+1}{2},\label{xxt}
\end{eqnarray}
in the sense that  $x^*$  globally solves ${\rm (U)}$ if and only if $(x^*,t^*):=(x^*,x^{*T}x^*)$ globally  solves ${\rm (S)}$.
\end{thm}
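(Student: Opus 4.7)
The plan is to prove the theorem in two steps. First, observe that squaring the cone constraint (\ref{xxt}) yields the equivalent inequality $x^T x \leq t$; hence every feasible $x$ of ${\rm (U)}$ induces a feasible $(x, x^T x)$ of ${\rm (S)}$ with identical objective value, so ${\rm (S)}$ is a relaxation and $v({\rm S}) \geq v({\rm U})$. It therefore suffices to convert an arbitrary optimal $(x^*, t^*)$ of ${\rm (S)}$ into a feasible $\hat x$ of ${\rm (U)}$ with $f_0(\hat x) = v({\rm S})$. If $t^* = x^{*T} x^*$ there is nothing to do, so I may assume $t^* > x^{*T} x^*$, i.e., the cone constraint is slack.

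In this slack regime, the KKT conditions for the convex program ${\rm (S)}$ yield scalars $\mu_i$ such that $b_0 = 2\sum_{i=1}^p \mu_i b_i$ and $\sum_{i=1}^p \mu_i = 1$. I will search for a nonzero direction $(d, s) \in \Bbb R^n \times \Bbb R$ along which both the linear constraints (\ref{r11}) and the objective stay constant. Invariance of the constraints forces $s + 2 b_i^T d = 0$ for each $i$, while invariance of the objective forces $s + b_0^T d = 0$; eliminating $s$ reduces the question to finding a nonzero $d$ with $(2 b_i - b_0)^T d = 0$ for every $i = 1, \ldots, p$. This is the precise juncture at which Assumption (\ref{as:3}) is invoked. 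Under its first alternative, any nonzero $d \in {\rm span}\{b_1,\ldots,b_p\}^{\perp}$ works, since then $b_i^T d = 0$ and the KKT stationarity gives $b_0^T d = 2\sum \mu_i b_i^T d = 0$. Under its second alternative $p = n$ (the sub-case ${\rm rank}[b_1,\ldots,b_n] < n$ already falling under the first alternative), the $b_i$ form a basis of $\Bbb R^n$, and one prescribes $b_i^T d = c$ for any $c \neq 0$; the KKT identity $\sum \mu_i = 1$ then forces $b_0^T d = 2c$ automatically, so the system is consistent and its unique solution $d$ is nonzero.

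Once such $d$ is available, setting $s := -b_0^T d$ and travelling along the line $(x^* + \alpha d,\, t^* + \alpha s)$ leaves every linear constraint and the objective of ${\rm (S)}$ unchanged. The final step chooses $\alpha$ to drive the cone constraint to equality, which amounts to solving the quadratic
\[
\|d\|^2 \alpha^2 + (2 d^T x^* - s)\, \alpha + (x^{*T} x^* - t^*) = 0,
\]
whose discriminant $(2 d^T x^* - s)^2 + 4 \|d\|^2 (t^* - x^{*T} x^*)$ is nonnegative because $\|d\|^2 > 0$ and $t^* \geq x^{*T} x^*$; a real root $\alpha$ therefore exists. The resulting $\hat x := x^* + \alpha d$ satisfies $\hat x^T \hat x = t^* + \alpha s$ and is feasible for ${\rm (U)}$, delivering $f_0(\hat x) = v({\rm S})$. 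The principal technical obstacle is the construction of $d$ in the second alternative $p = n$: there is no direction orthogonal to all of $b_1, \ldots, b_p$, and the argument must lean on the precise KKT identity $\sum \mu_i = 1$ in order to make the otherwise overdetermined system $b_i^T d = c$, $b_0^T d = 2c$ consistent — this is exactly where Assumption (\ref{as:3}) earns its keep and where any weakening of the hypothesis threatens to break the construction.
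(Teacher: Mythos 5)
Your proof is correct, and it takes a genuinely different route from the paper's, even though both share the same skeleton (observe that (\ref{xxt}) is $x^Tx\le t$, so (S) is a relaxation; then perturb an optimal solution of (S) with slack cone constraint until $x^Tx=t$ without changing the objective). The paper works without multipliers: in the rank-deficient case it picks a direction orthogonal only to a \emph{maximal linearly independent set of active} $b_{i_k}$'s, derives $b_0^T\widetilde{b}=0$ from local optimality, and then needs a secondary combinatorial argument (the sets $E$ and $J$ and a contradiction with the maximality of $m$) to rule out hitting new constraints before the cone becomes tight; in the case $p=n$ with full rank and all constraints active it reduces (S) to a one-dimensional linear program over the interval $\{t: g(t)\le 0\}$ whose endpoints satisfy $x^Tx=t$. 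You instead extract the LP-duality/KKT identities $b_0=2\sum_i\mu_i b_i$ and $\sum_i\mu_i=1$ (valid because the only nonlinear constraint is inactive at the optimum, so the active system is affine and no constraint qualification is needed), and use them to build a single direction $(d,s)$ along which \emph{every} linear constraint and the objective are exactly invariant for all step lengths; the quadratic in $\alpha$ with positive discriminant then finishes both cases uniformly. This buys you a cleaner argument: you never need to track which constraints are active, you avoid the paper's $E$/$J$ bookkeeping entirely, and the $p=n$ case needs no activity assumption. What you pay is the reliance on the multiplier identity $\sum_i\mu_i=1$, which is exactly the right price for the $p=n$ alternative of Assumption (\ref{as:3}). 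The only genuine omission is the unbounded case: the paper first shows that $v({\rm S})=+\infty$ forces $u_i=+\infty$ for all $i$ and hence $v({\rm U})=+\infty$, whereas you silently assume an optimal $(x^*,t^*)$ of (S) exists; this case must be dispatched for the ``only if'' direction of the equivalence to be meaningful, but it is a two-line addition rather than a flaw in your main construction.
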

\begin{proof}
We first notice that the constraint (\ref{xxt}) is equivalent to
\begin{equation}
x^Tx\le t.\label{con}
\end{equation}
Suppose $v({\rm S})=+\infty$, it follows from (\ref{con}) that for any unbounded feasible solution of (S), say $(x,t)$, we have $t\rightarrow +\infty$. Since
\[
t+2b_{i}^Tx \ge t-2\sqrt{b_i^Tb_i}\sqrt{x^Tx}=t-2\|b_i\|\sqrt{t}  \rightarrow +\infty,
\]
we have $u_{i}=+\infty$, for $i=1,\ldots,p$.  Thus, we obtain $v({\rm U})=+\infty$.
Now we can assume $v({\rm S})<+\infty$ and let $(x^*, t^*)$ be an optimal solution of (S). If $(x^*)^Tx^*=t^*$, then $x^*$ is a feasible solution of (U) and $v({\rm S})=f_0(x^*)\le v({\rm U})$. On the other hand, as (S) is a relaxation of (U), we always have $v({\rm S})\ge v({\rm U})$. Therefore, it holds that $v({\rm S})= v({\rm U})$, which completes the proof.
 In the remaining, we assume $(x^*)^Tx^*<t^*$ and consider the following two cases:
 \begin{itemize}
\item[(i)] Suppose ${\rm rank}\left[b_{1},\ldots,b_{p}\right]\leq n-1$. Let $m$ be  the largest number such that there are $m$ indices $i_1,\ldots,i_m\in \{1,\ldots,p\}$  satisfying that ${\rm rank}\left[b_{i_1},\ldots,b_{i_m}\right]=m$ and the $i_k$-th  constraint in (\ref{r11}) is active at $(x^*, t^*)$ for $k=1,\ldots,m$. According to the definitions, we have $m\le n-1$ and
\[
t^*+2b_{i_k}^Tx^*=\delta_{i_k},~k=1,\ldots,m,
\]
where $\delta_{i_k}=l_{i_k}$ or $u_{i_k}$ for $k=1,\ldots,m$. Then, there is a nonzero vector $\widetilde{b}\in \Bbb R^n$ such that $b_{i_k}^T\widetilde{b}=0$ for $k=1,\ldots,m$. Since
\[
\widetilde{x}(\varepsilon):= x^*+\varepsilon \widetilde{b}
\]
remains feasible for (S) when  $|\varepsilon|$ is sufficiently small, we have $\widetilde{b}^Tb_0=0$. It follows that $\widetilde{x}(\varepsilon)$ remains optimal for small $|\varepsilon|$.  Define
\begin{eqnarray*}
E&=&\{\varepsilon\in\Bbb R\mid~l_{i}\leq t^*+2b_{i}^T\widetilde{x}(\varepsilon)\leq u_{i},~i=1,\ldots,p\},\\
J&=&\{j\in\{1,\ldots,p\}\setminus\{i_1,\ldots, i_m\}\mid~
{\rm rank}\left[b_{i_1},\ldots,b_{i_m},b_j\right]=m+1,~b_{j}^T\widetilde{b}\neq 0\}.
\end{eqnarray*}
If $E$ is unbounded, there is an $\varepsilon$ such that $\widetilde{x}(\varepsilon)^T\widetilde{x}(\varepsilon)=t^*$ and hence completes the proof.
Otherwise, it is sufficient to assume $J\neq \emptyset$ and $E$ is bounded,
since $J=\emptyset$ implies the unboundedness of $E$.
In this case, there is a $j_0\in J$ and $\varepsilon_0$ such that
$\widetilde{x}(\varepsilon_0)$ is feasible  for (S) and the $j_0$-th constraint in (\ref{r11}) becomes active at $(x^*, t^*)$, we obtain a contradiction against the definition of $m$.

\item[(ii)] Suppose $p=n$. According to Case (i), it is sufficient to consider the case ${\rm rank}\left[b_{1},\ldots,b_{p}\right] =n$ and all the $n$ constraints (\ref{r11}) are active at $(x^*, t^*)$, i.e.,
\begin{equation}
Bx^*=\delta-t^*e, \label{lineq}
\end{equation}
where $\delta=(\delta_i)\in\Bbb R^n$ with $\delta_{i}=l_{i}$ or $u_{i}$, $e\in \Bbb R^n$ is the vectors of all ones, the coefficient matrix $B=[2b_{1}, 2b_{2}, \ldots, 2b_{n}]^T$ is nonsingular.   Define
\[
\widetilde{x}(t)= B^{-1}(\delta-te).
\]
Then (S) is equivalently reduced to the following one-dimensional optimization problem:
\[
{\rm (S')}~~ \max_{t\ge \widetilde{x}(t)^T\widetilde{x}(t)} t+ b_{0}^T\widetilde{x}(t)
=  \max_{g(t)\le 0} \left\{ b_{0}^TB^{-1}\delta +t(1-b_{0}^TB^{-1}e)\right\},
\]
where $g(t)=(\delta-te)^TB^{-T}B^{-1}(\delta-te)-t$ is a strictly convex quadratic function.
It follows from the facts
 \[
 \widetilde{x}(t^*)=x^*,~g(t^*)=x^{*T}x^*-t^*<0,~\lim_{t\rightarrow +\infty}g(t)=+\infty
 \]
that the equation $g(t)=0$ has two roots $\widetilde{t}_1<\widetilde{t}_2$ and
\[
\{t\mid~g(t)\le 0\}= \{t\mid~ \widetilde{t}_1 \le t\le \widetilde{t}_2\}.
\]
Since  the objective function of (${\rm S'}$) is linear,
either $\widetilde{t}_1$ or $\widetilde{t}_2$ is an optimal solution of (${\rm S'}$). But both satisfy $t=\widetilde{x}(t)^T\widetilde{x}(t)$.  The proof is complete.
\end{itemize}
\end{proof}

The following example illustrates that Assumption (\ref{as:3}) seems to be tight and can not be expected for an improvement.
\begin{exam}
Consider the following instance of (U)
\begin{eqnarray*}
 &\max& x^2\\
&{\rm s.~t.}& 1\leq x^2+2x\leq3,\\
&&  -1\leq x^2-2x\leq3,
\end{eqnarray*}
where $n=1,~p=2$, and ${\rm rank}[b_{1},b_{2}]=1$. Assumption (\ref{as:3}) fails to hold.
It is easy to verify that the optimal value is $1$. However,
the SOCP relaxation (S) 
\begin{eqnarray*}
 &\max& t\\
&{\rm s.~t.}& 1\leq t+2x\leq3,\\
&&  -1\leq t-2x\leq3,\\
&& \left\|\left(\begin{array}{c} x\\ \frac{t-1}{2}\end{array}\right)\right\| \le  \frac{t+1}{2}
\end{eqnarray*}
has an optimal value $3$.
\end{exam}

The strong duality for (UQ$_+$) was first  established in \cite{Be07} under Assumption  (\ref{as:1}) and then strengthened in \cite{Be} by assuming that (\ref{as:2}) holds and exactly one of the three cases (\ref{as:3}) occurs. At the end of this section, we show that, following Theorem \ref{thm4.2}, the second-order cone programming relaxation (\ref{RQ}) is as tight as the primal SDP relaxation (\ref{PSDP}) and the strong duality immediately holds for (UQ$_+$)  under the slightly weaker Assumption (\ref{as:3}).

\begin{thm}\label{thm4.2}
Suppose the Slater condition holds for $({\rm UQ}_+)$ and $v({\rm UQ}_+)<+\infty$, we have
\begin{equation}
v({\rm SOCP})=v({\rm P\text{-}SDP_1})=v({\rm D\text{-}SDP}).\label{sdp-soc}
\end{equation}
 Moreover, the problem ${\rm (UQ}_+)$ enjoys the strong duality under the further assumption (\ref{as:3}), i.e., $v({\rm UQ}_+)=v({\rm D\text{-}SDP})$.
\end{thm}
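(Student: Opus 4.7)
The plan is to establish the chain
\[
v({\rm UQ}_+)\stackrel{(a)}{=}v({\rm SOCP})\stackrel{(b)}{=}v({\rm P\text{-}SDP_1})\stackrel{(c)}{=}v({\rm D\text{-}SDP}),
\]
in which (b) and (c) together give (\ref{sdp-soc}), and (a), which uses Assumption (\ref{as:3}), completes the strong-duality claim. Equality (c) is essentially the SDP strong-duality statement already flagged in the paragraph preceding Theorem \ref{thm4.2}: $Q\succ 0$ makes $\lambda_i=0$ together with $\tau$ sufficiently large a Slater point of (D-SDP), while a Slater point $x_0$ of (UQ$_+$) combined with $Y=x_0 x_0^{T}+\varepsilon I_n$ for small $\varepsilon>0$ supplies a Slater point of (P-SDP$_1$); standard conic strong duality then closes the gap.

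The core of the argument is (b). I would first square the cone constraint (\ref{soc}) to observe it is equivalent to $x^{T}Qx\le t$, so (SOCP) is a linearly-constrained problem in $(x,t)$ augmented only by this scalar inequality. The key observation, and the place where the \emph{uniformity} of $Q$ across all constraints is decisive, is that (P-SDP$_1$) depends on the matrix variable $Y$ only through the scalar $Q\bullet Y$: every linear functional of the block matrix appearing in the objective or constraints has the form $Q\bullet Y+2b_i^{T}x+d_i$. Consequently, setting $t:=Q\bullet Y$ sends any feasible $(Y,x)$ of (P-SDP$_1$) to a feasible $(x,t)$ of (SOCP) of equal objective value, since
\[
t-x^{T}Qx=Q\bullet(Y-xx^{T})\ge 0
\]
by $Q\succ 0$ and $Y-xx^{T}\succeq 0$ (the Schur complement of the $1$-block). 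Conversely, given feasible $(x,t)$ of (SOCP), the explicit lift
\[
Y:=xx^{T}+\frac{t-x^{T}Qx}{{\rm Tr}(Q)}\,I_{n}
\]
is well-defined (because $Q\succ 0$ forces ${\rm Tr}(Q)>0$), satisfies $Y\succeq xx^{T}$, and obeys $Q\bullet Y=t$, so $(Y,x)$ is feasible for (P-SDP$_1$) with matching objective value. This two-sided reduction delivers (b).

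For (a), once (\ref{sdp-soc}) is in hand I would apply Theorem \ref{thm1} after the invertible substitution $y=Q^{1/2}x$, $c_i=Q^{-1/2}b_i$ already used to pass from (UQ$_+$) to (UQ$_0$). Since $Q^{-1/2}$ is nonsingular, ${\rm rank}[c_1,\ldots,c_p]={\rm rank}[b_1,\ldots,b_p]$, so Assumption (\ref{as:3}) transports intact to the transformed problem, and Theorem \ref{thm1} yields $v({\rm UQ}_+)=v({\rm SOCP})$; chaining with (\ref{sdp-soc}) produces $v({\rm UQ}_+)=v({\rm D\text{-}SDP})$. I expect the main point of subtlety to be the lift above: its correctness hinges on every linear functional of $Y$ in (P-SDP$_1$) being proportional to the single scalar $Q\bullet Y$, so that one correction along the trace direction simultaneously matches all of them. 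This is precisely where the uniformity hypothesis defining (UQ$_+$) does its work, and it is what underlies the otherwise surprising fact that an SOCP relaxation can be as tight as a full SDP relaxation in this setting.
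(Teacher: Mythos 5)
Your proposal is correct in substance, but it reaches the key equality $v({\rm SOCP})=v({\rm P\text{-}SDP_1})$ by a genuinely different route than the paper. The paper never compares the two primal relaxations directly: it rewrites (SOCP) as a convex QCQP via $x^TQx\le t$, observes that this convex program has the \emph{same} Lagrangian dual (D-SDP) as (UQ$_+$) itself, invokes convex strong duality (Proposition 6.5.6 of Bertsekas et al.) to get $v({\rm SOCP})=v({\rm D\text{-}SDP})$, and separately uses conic duality to get $v({\rm P\text{-}SDP_1})=v({\rm D\text{-}SDP})$; the link between the two primal relaxations is thus made indirectly through the common dual. You instead build an explicit two-sided feasible-point correspondence: $t:=Q\bullet Y$ in one direction, and the trace-direction lift $Y:=xx^T+\frac{t-x^TQx}{{\rm Tr}(Q)}I_n$ in the other, exploiting the fact that the uniform Hessian makes $Y$ enter (P-SDP$_1$) only through the scalar $Q\bullet Y$. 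This is more elementary and more informative: it shows $v({\rm SOCP})=v({\rm P\text{-}SDP_1})$ holds \emph{without} any Slater assumption, isolating the regularity hypothesis to the single conic-duality step $v({\rm P\text{-}SDP_1})=v({\rm D\text{-}SDP})$, whereas the paper's route needs Slater twice but requires no construction. Your treatment of the second claim (transporting Assumption (\ref{as:3}) through the nonsingular substitution and applying Theorem \ref{thm1}) matches the paper. One small slip: your proposed Slater point for (D-SDP), namely $\lambda_i=0$ with $\tau$ large, does not work, since the leading block of the dual LMI is $\left(\sum_{i=1}^p\lambda_i-1\right)Q$, which equals $-Q\prec 0$ at $\lambda=0$; you need $\sum_{i=1}^p\lambda_i>1$ (e.g.\ $\lambda_i=2/p$) before taking $\tau$ large. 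This is easily repaired and does not affect the architecture of your argument.
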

\begin{proof}
As proved at beginning of Theorem \ref{thm1}, if  $v{\rm (UQ}_+)<+\infty$,
then $v({\rm SOCP})<+\infty$. 
Replacing (\ref{soc}) with the equivalent convex quadratic constraint
$x^TQx\le t$, we can reformulate  (SOCP) as a convex QCQP, denoted by (CQCQP). Then we have
\[
v({\rm SOCP})=v({\rm CQCQP}).
\]
It is not difficult to verify that the Lagrangian dual of (CQCQP) is exactly the same as that of ${\rm (UQ}_+)$, i.e., $({\rm D\text{-}SDP})$. Then, the Slater condition guarantees that  (CQCQP) itself enjoys the strong duality, i.e.,
\[
v({\rm CQCQP})=v({\rm D\text{-}SDP}),
 \]
see for example, Proposition 6.5.6 in \cite{Bert}. Since the Slater condition holds for $({\rm P\text{-}SDP_1})$ and its conic dual $({\rm D\text{-}SDP})$, strong duality holds for $({\rm P\text{-}SDP_1})$ and $({\rm D\text{-}SDP})$ \cite{V}, i.e.,
\[
v({\rm P\text{-}SDP_1})= v({\rm D\text{-}SDP}).
\]
Therefore, we obtain the equalites (\ref{sdp-soc}).
On the other hand, since linear transformation does not change the strong duality, it follows from Theorem \ref{thm1} that
\[
v{\rm (UQ}_+)=v({\rm SOCP})
\] 
under the further assumption (\ref{as:3}).
Thus, the proof of the second part is complete according to (\ref{sdp-soc}).
\end{proof}

\section{Sufficient Conditions for Hidden Convexity of the General (QCQP)}

Now we extend the above convexity approach for the uniform quadratic optimization problem (UQ$_+$) to the general quadratic
constrained quadratic programming (QCQP):
\begin{eqnarray}
{\rm (QCQP)}~~&\min & g_{0}(x) ~~({\rm or}~\max-g_{0}(x)) \label{qcqp:1}\\
&{\rm s.~t.}& l_i\le g_{i}(x)\leq u_i,~i=1,\ldots,p,\label{qcqp:2}
\end{eqnarray}
where the functions $g_{i}(x): \Bbb R^n\rightarrow \Bbb R~(i=0,1,\ldots,p)$ are given by
 \begin{eqnarray}
g_{i}(x)=\sum_{j=1}^m a_{ij}x^TQ_{j}x+2b_{i}^Tx+c_{i},\label{f}
 \end{eqnarray}
with $a_{ij}\in \{-1,0,1\}$, $Q_{j}=Q^T_{j}\succeq 0$, $b_{i}\in \Bbb R^n$ and $c_{i}\in\Bbb R$, for $i=0,1,\ldots,p$ and $j=1,\ldots,m$. (QCQP) is the simplest type of nonconvex nonlinear programming. It has many applications, see \cite{A97,Bao11} and references therein.
In this section, we establish a new sufficient condition under which (QCQP) is equivalent to its convex relaxation. Our new result not only generalizes
a few existing results for some special (QCQP) but also partially gives answers to a few open questions in literature.

\subsection{One-Sided QCQP}
In this subsection, we study the one-sided (QCQP), denoted by (QCQP$_1$), which is  a relatively easy case of (QCQP) with $l_i=-\infty$ for $i=1,\ldots,p$.
Our first main result is as follows.
\begin{thm}\label{mainthm}
Suppose
\begin{eqnarray}
\max_{j\in J}\left\{{\rm dim}\left\{{\rm span}\{b_{1},\ldots,b_{p}\}\bigcup \mathcal N(Q_{j})\bigcup_{i\neq j} \mathcal R(Q_{i})\right\}\right\}\leq n-1,\label{c}
\end{eqnarray}
the problem $({\rm QCQP_1})$ is equivalent to the following convex QCQP relaxation:
\begin{eqnarray}
{\rm (CR)}~~&\min & \sum_{j\not\in J}x^TQ_{j}x+\sum_{j\in J}a_{0j}t_{j}+2b_{0}^Tx+c_{0} \nonumber\\
&{\rm s.~t.}& \sum_{j\not\in J}x^TQ_{j}x+\sum_{j\in J}a_{ij}t_{j}+2b_{i}^Tx+c_{i}\leq 0,~i=1,\ldots,p,\nonumber\\
&& \left\|\left(\begin{array}{c} Q_j^{\frac{1}{2}}x \\ \frac{t_j-1}{2}\end{array}\right)\right\| \le  \frac{t_j+1}{2},~{j\in J},\label{cr1}
\end{eqnarray}
in the sense
that  $x^*$  globally solves $({\rm QCQP_1})$ if and only if $(x^*,t_j^*(j\in J)):=(x^*,x^{*T}Q_jx^*(j\in J))$ globally  solves $({\rm CR})$,
under the assumption $v({\rm CR})>-\infty$, where
$$
J=\bigcup\left\{j\in\{1,\ldots,m\}\mid~there~is~ an~ i\in \{0,\ldots,p\}~ such~ that~ a_{ij}=-1\right\}.
$$
\end{thm}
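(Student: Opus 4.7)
The plan is to extend the perturbation strategy of Theorem~\ref{thm1} to the multi-index setting, handling one index $j\in J$ at a time with the orthogonal directions delivered by the dimension condition~(\ref{c}). First I verify that (CR) is a genuine \emph{convex} relaxation of $({\rm QCQP}_1)$: each $Q_j\succeq 0$ makes~(\ref{cr1}) equivalent to the convex constraint $x^TQ_jx\le t_j$, while for $j\notin J$ the coefficients $a_{ij}\in\{0,1\}$ keep the explicit quadratics convex; moreover, any feasible $x$ of $({\rm QCQP}_1)$ lifts to $(x,(x^TQ_jx)_{j\in J})$, which is feasible for (CR) with matching objective and every SOC tight, so $v({\rm CR})\le v({\rm QCQP}_1)$. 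The substance of the theorem is the reverse inequality together with the recovery of a QCQP-feasible point from any optimizer of (CR).

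Now take an optimal solution $(x^*,(t_j^*)_{j\in J})$ of (CR). If $(x^*)^TQ_jx^*=t_j^*$ for every $j\in J$, the proof is complete, so I fix an index $j_0\in J$ with $(x^*)^TQ_{j_0}x^*<t_{j_0}^*$ and invoke~(\ref{c}) to obtain a nonzero $\tilde b\in\Bbb R^n$ orthogonal to ${\rm span}\{b_1,\ldots,b_p\}\cup \mathcal{N}(Q_{j_0})\cup\bigcup_{i\neq j_0}\mathcal{R}(Q_i)$. Unpacking these orthogonalities, $b_i^T\tilde b=0$ for $i=1,\ldots,p$, $Q_i\tilde b=0$ for every $i\neq j_0$, and $\tilde b\in\mathcal{R}(Q_{j_0})\setminus\{0\}$, which forces $\tilde b^TQ_{j_0}\tilde b>0$. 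Along $x^*+\varepsilon\tilde b$, every quadratic term in every (CR) constraint and every SOC constraint other than the $j_0$-th is frozen, so feasibility persists on a neighbourhood of $\varepsilon=0$; the only possibly varying contribution to the objective is $2\varepsilon b_0^T\tilde b$, and optimality of $x^*$ combined with the bi-directional admissibility of $\varepsilon$ forces $b_0^T\tilde b=0$, exactly as in Theorem~\ref{thm1}.

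Next I pick $\varepsilon$ by solving the univariate quadratic $\tilde b^TQ_{j_0}\tilde b\,\varepsilon^2+2\tilde b^TQ_{j_0}x^*\,\varepsilon=t_{j_0}^*-(x^*)^TQ_{j_0}x^*$. Since the left side is an upward parabola in $\varepsilon$ vanishing at $\varepsilon=0$ while the right side is strictly positive, two real roots exist; either produces a new optimal point of (CR) at which the $j_0$-th SOC holds with equality. Iterating this step over the remaining slack indices $j\in J$ is legitimate because the direction $\tilde b$ supplied for any later index $j$ satisfies $Q_{j'}\tilde b=0$ for every previously processed $j'$, so the equalities already attained survive. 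After at most $|J|$ rounds the resulting $\tilde x$ achieves $\tilde x^TQ_j\tilde x=t_j^*$ for all $j\in J$, hence $\tilde x$ is feasible for $({\rm QCQP}_1)$ with $g_0(\tilde x)=v({\rm CR})$, proving both the equality of optimal values and the stated correspondence between optimizers.

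The main obstacle is designing the iteration so that successive perturbations do not undo one another; the surgical orthogonality $Q_i\tilde b=0$ for all $i\neq j_0$ built into~(\ref{c}) is precisely what renders each perturbation invisible to all other $Q_i$'s and $b_i$'s simultaneously. A secondary subtlety is that $b_0^T\tilde b=0$ is \emph{not} built into~(\ref{c}); it has to be extracted from the optimality of $x^*$ and the reversibility of the perturbation, mirroring the argument used in the proof of Theorem~\ref{thm1}.
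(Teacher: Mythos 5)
Your proof is correct and follows essentially the same route as the paper's: extract a nonzero direction $\tilde b$ from the orthogonal complement guaranteed by (\ref{c}), observe $\tilde b^TQ_{j_0}\tilde b>0$ via $\mathcal R(Q_{j_0})\cap\mathcal N(Q_{j_0})=\{0\}$, and solve a scalar quadratic in $\varepsilon$ to push the slack second-order-cone constraint to equality while leaving all other constraints frozen. You are in fact slightly more careful than the paper on two points it leaves implicit: the derivation of $b_0^T\tilde b=0$ from two-sided feasibility and optimality (the paper simply asserts the perturbed point has the same objective value), and the explicit iteration over all slack indices $j\in J$ together with the observation that $Q_{j'}\tilde b=0$ preserves the equalities already attained (the paper treats a single $j_0$ and declares the proof complete).
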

\begin{proof}
Let $(x^*, t_{j}^*(j\in J))$ be an optimal solution of (CR) since $v({\rm CR})>-\infty$ and the feasible region of (CR) is closed. Note that (\ref{cr1}) is equivalent to 
\[
{x}^TQ_{j}{x}\le t_{j}.
\]
Suppose ${x}^TQ_{j}{x}= t_{j}$ for all $j\in J$, the proof is complete. Now,
we assume that there is a $j_0\in J$ such that ${x^*}^TQ_{j_0}{x^*}<t_{j_0}^*$.

Let $(\cdot)^{\bot}$ be the orthogonal complement of the subspace $(\cdot)$. Notice that $\mathcal N^{\bot}(Q)=\mathcal R(Q)$ and $(A\cup B)^{\bot}=A^{\bot}\cap B^{\bot}$. Then, (\ref{c}) implies that
\[
\min_{j\in J}\left\{{\rm dim}\left\{\left({\rm span}\{b_{1},\ldots,b_{p}\}\right)^{\bot}\bigcap \mathcal R(Q_{j})\bigcap_{i\neq j}\mathcal N(Q_{i})\right\}\right\}\geq 1.
\]
Therefore, there is an $x_0\in \left({\rm span}\{b_{1},\ldots,b_{p}\}\right)^{\bot}\bigcap \mathcal R(Q_{j_0})\bigcap_{i\neq j_0}\mathcal N(Q_{i})$ and $x_0\neq 0$.
Since $Q_{j_{0}}\succeq 0$, we must have $x_0^TQ_{j_{0}}x_0>0$. Otherwise, $x_0^TQ_{j_{0}}x_0=0$ and then $Q_{j_{0}}x_0=0$. It follows that $x_0\in \mathcal R(Q_{j_0})\bigcap \mathcal N(Q_{j_0})=\{0\}$, which is contradiction.
According to the fact ${x^*}^TQ_{j_0}{x^*}<t_{j_0}^*$, the following equation
\[
(x^*+\alpha x_{0})^TQ_{j_{0}}(x^*+\alpha x_{0})=t_{j_{0}}^*
\]
has a solution, denoted by $\alpha_0$. Define
\[
\widetilde{x}:= x^*+\alpha_0 x_{0}.
\]
We can see that $(\widetilde{x},t_j^*(j\in J))$
is  feasible for $({\rm QCQP_1})$ with the same objective value as $(x^*,t_j^*(j\in J))$. That is, $(\widetilde{x},t_j^*(j\in J))$ is also an optimal solution of $({\rm QCQP_1})$.  The proof is complete by noting that
$\widetilde{x}^TQ_{j_0}\widetilde{x}=t_{j_0}^*$.
\end{proof}

Next, we show that Theorem \ref{mainthm} has many applications.

We first consider the well-known trust region subproblem \cite{C00,F96,F04,RW}:
\begin{eqnarray*}
(\rm TRS)~~&\min & x^TAx+2b^Tx\\
&{\rm s.~t.}& \|x\|^2\leq1,
\end{eqnarray*}
where $A=A^T\in\Bbb R^{n\times n}$ and $b\in \Bbb R^n$. If $A\succeq 0$, (TRS) is already a convex programming problem. Otherwise, we have:
\begin{cor}\label{cor:1}
Suppose $A\not\succeq 0$, the nonconvex
(TRS) is equivalent to the following ball constrained convex quadratic programming relaxation:
\begin{eqnarray*}
&\min& x^T(A-\lambda_{\min}(A)I_{n})x+2b^Tx+\lambda_{\min}(A)\\
&{\rm s.~t.}& \|x\|^2\leq1.
\end{eqnarray*}
\end{cor}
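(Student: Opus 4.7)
The plan is to invoke Theorem \ref{mainthm} after recasting (TRS) as a two-quadratic instance of (QCQP$_1$). Since $A \not\succeq 0$, the scalar $\lambda := \lambda_{\min}(A) < 0$; set $\mu := -\lambda > 0$. I would take $m=2$, $p=1$, $Q_1 := A + \mu I_n \succeq 0$ (whose null space is the $\lambda$-eigenspace of $A$) and $Q_2 := \mu I_n$. The shift identity $x^TAx = x^TQ_1 x - x^TQ_2 x$ casts the objective as $g_0(x) = x^TQ_1 x - x^TQ_2 x + 2b^Tx$ with $a_{01}=1,~a_{02}=-1$, while multiplying the ball constraint by $\mu$ yields $g_1(x) = x^TQ_2 x - \mu \le 0$ with $a_{11}=0,~a_{12}=1,~b_1=0$. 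Choosing $Q_2 = \mu I_n$ rather than $I_n$ is deliberate: only under this rescaling do all coefficients $a_{ij}$ lie in $\{-1,0,1\}$ as required by (\ref{f}).

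Next I would verify condition (\ref{c}). The unique index with a negative coefficient is $j=2$, so $J=\{2\}$. Because $b_1=0$ and $\mathcal{N}(Q_2)=\{0\}$, the union on the left-hand side of (\ref{c}) collapses to $\mathcal R(Q_1) = \mathcal R(A - \lambda I_n)$, whose dimension is $n-k$ with $k \ge 1$ the multiplicity of $\lambda_{\min}(A)$. Hence (\ref{c}) holds and Theorem \ref{mainthm} delivers the equivalence of (TRS) with the convex problem
\begin{eqnarray*}
&\min& x^TQ_1 x - t_2 + 2b^Tx\\
&\text{s.t.}& t_2 \le \mu,~~\mu\|x\|^2 \le t_2,
\end{eqnarray*}
where $\mu\|x\|^2 \le t_2$ is the convex quadratic form of the conic inequality (\ref{cr1}) for $j=2$.

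Finally I would eliminate $t_2$. Since the objective is strictly decreasing in $t_2$ while the upper bound is $t_2 \le \mu$, any optimum must satisfy $t_2 = \mu$; this collapses $\mu\|x\|^2 \le t_2$ to $\|x\|^2 \le 1$, and substituting $t_2 = \mu$ into the objective yields $x^T(A - \lambda_{\min}(A) I_n) x + 2b^Tx + \lambda_{\min}(A)$, exactly the convex QP claimed in the corollary. The main obstacle is the initial modelling step: one must choose the decomposition so that (i) every $a_{ij}$ lands in $\{-1,0,1\}$ and (ii) condition (\ref{c}) holds without any further hypothesis. Both requirements resolve automatically once we exploit the fact that $\lambda_{\min}(A)$ is an eigenvalue of $A$, which forces $\dim \mathcal R(Q_1) \le n-1$.
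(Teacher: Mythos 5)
Your proof is correct and follows essentially the same route as the paper: decompose $A$ via its smallest eigenvalue, invoke Theorem \ref{mainthm} (with condition (\ref{c}) holding automatically because $\lambda_{\min}(A)$ is an eigenvalue, so $\dim\mathcal R(A-\lambda_{\min}(A)I_n)\le n-1$), and then eliminate the auxiliary variable by monotonicity using $\lambda_{\min}(A)<0$. Your only deviation is the normalization $Q_2=-\lambda_{\min}(A)I_n$ so that every coefficient $a_{ij}$ lands literally in $\{-1,0,1\}$ as (\ref{f}) requires, whereas the paper applies the theorem directly with the coefficient $\lambda_{\min}(A)$ on $x^Tx$; this is a slightly more careful reading of the theorem's hypotheses, not a different argument.
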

\begin{proof}
We can recast (TRS) as the following formulation:
\begin{eqnarray*}
&\min & x^T(A-\lambda_{\min}(A)I_{n})x+2b^Tx+\lambda_{\min}(A)x^Tx\\
&{\rm s.~t.}& x^Tx\leq1.
\end{eqnarray*}
According to Theorem \ref{mainthm}, it is equivalent to the following relaxation
\begin{eqnarray}
{\rm (TRSR)}~~&\min & x^T(A-\lambda_{\min}(A)I_{n})x+2b^Tx+\lambda_{\min}(A)t \nonumber\\
&{\rm s.~t.}& t\le 1, ~x^Tx\leq t, \nonumber
\end{eqnarray}
since $v({\rm TRSR})>-\infty$ and the assumption (\ref{c}) reduces to
\[
{\rm dim}\left\{\mathcal R(A-\lambda_{\min}(A)I_{n})\right\}\leq n-1,
\]
which is always true.
As we have assumed $A\not\succeq 0$, it holds that $\lambda_{\min}(A)<0$. Thus, for any optimal solution of the above problem, denoted by $(x^*,t^*)$, we always have $t^*=1$, which completes the proof.
\end{proof}

As a further extension, we study (TRS) with additional linear inequality constraints \cite{HS,JL,sz}:
\begin{eqnarray}
{\rm (ETRS)}~~&\min_{x\in \Bbb R^n}& x^TAx+a^Tx\nonumber\\
&{\rm s.~t.}&  \|x-x_{0}\|^2\leq u,\nonumber\\
&& b_{i}^Tx\leq \beta_{i},~i=1,\ldots,p,\nonumber
\end{eqnarray}
where $A=A^T\in\Bbb R^{n\times n}$, $a,~b_{i},~x_{0}\in \Bbb R^n$ and $u,~\beta_{i}\in \Bbb R$.
Again, we assume $A\not\succeq 0$, i.e., $\lambda_{\min}(A)<0$.
\begin{cor}
Suppose $A\not\succeq 0$ and
\begin{equation}
{\rm dim}\left\{{\rm span}\{b_{1},\ldots,b_{p}\}\bigcup\mathcal R(A-\lambda_{\min}(A)I_{n})\right\}\leq n-1,\label{etrs}
\end{equation}
(ETRS) is equivalent to the following linear and ball constrained convex quadratic programming relaxation:
 \begin{eqnarray*}
&\min & x^T(A-\lambda_{\min}(A) I_{n})x+a^Tx+\lambda_{\min}(A) (u+2x^Tx_{0}-\|x_{0}\|^2)\nonumber\\
&{\rm s.~t.}&  \|x-x_{0}\|^2\leq u,\label{o}\\
&& b_{i}^Tx\leq \beta_{i},~i=1,\ldots,p.\nonumber
\end{eqnarray*}
 \end{cor}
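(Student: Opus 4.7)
The plan is to extend Corollary \ref{cor:1} by first removing the shift $x_0$ via an affine change of variable, then applying Theorem \ref{mainthm}, and finally exploiting $\lambda_{\min}(A)<0$ to force the ball constraint to be active.

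First I would perform the substitution $y=x-x_0$. Under this change the ball constraint becomes $\|y\|^2\le u$ (no linear term), the linear inequalities become $b_i^Ty\le \beta_i-b_i^Tx_0$, and the objective becomes $y^TAy+(2Ax_0+a)^Ty+(x_0^TAx_0+a^Tx_0)$. This substitution is precisely the reason hypothesis (\ref{etrs}) is stated without any reference to $x_0$: after the change of variable, the only vectors carrying linear information in the constraints are exactly $b_1,\ldots,b_p$.

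Following the reduction used in Corollary \ref{cor:1}, I would then decompose $y^TAy=y^T(A-\lambda_{\min}(A)I_n)y+\lambda_{\min}(A)\,y^Ty$ and apply Theorem \ref{mainthm} with $Q_1=A-\lambda_{\min}(A)I_n\succeq 0$ (appearing only in the objective with a positive coefficient) and $Q_2=I_n$ (with negative objective coefficient $\lambda_{\min}(A)$ and coefficient $+1$ in the ball constraint), so that $J=\{2\}$. Condition (\ref{c}) then reduces to ${\rm dim}\{{\rm span}\{b_1,\ldots,b_p\}\cup \mathcal{N}(I_n)\cup \mathcal{R}(A-\lambda_{\min}(A)I_n)\}\le n-1$, which is exactly (\ref{etrs}) since $\mathcal{N}(I_n)=\{0\}$. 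Theorem \ref{mainthm} therefore yields an equivalent lifted convex problem in $(y,t)$, with objective $y^T(A-\lambda_{\min}(A)I_n)y+(2Ax_0+a)^Ty+\lambda_{\min}(A)t+(x_0^TAx_0+a^Tx_0)$ and constraints $t\le u$, $b_i^Ty\le \beta_i-b_i^Tx_0$, and $\|y\|^2\le t$.

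Because $\lambda_{\min}(A)<0$, raising $t$ toward its upper bound $u$ strictly decreases the objective, so any optimal $(y^*,t^*)$ (which Theorem \ref{mainthm} guarantees can be taken with $\|y^*\|^2=t^*$) must satisfy $t^*=u$; hence $\|y^*\|^2=u$ and the ball constraint is active. Setting $t=u$ eliminates the lifting variable and, after translating back through $x=y+x_0$ and collecting terms, recovers exactly the convex relaxation stated in the corollary. The main bookkeeping step will be in this back-substitution: the cross-term $-2x_0^T(A-\lambda_{\min}(A)I_n)x$ from expanding $y^T(A-\lambda_{\min}(A)I_n)y$ must combine with $2x_0^TAx$ from $(2Ax_0+a)^T(x-x_0)$ to produce the linear correction $2\lambda_{\min}(A)\,x^Tx_0$, and the remaining constants must collapse cleanly into $\lambda_{\min}(A)(u-\|x_0\|^2)$; otherwise the argument is structurally identical to the proof of Corollary \ref{cor:1}.
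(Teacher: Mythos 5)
Your proposal is correct, and it reaches the same convex program by the same core mechanism as the paper (decompose $A=(A-\lambda_{\min}(A)I_n)+\lambda_{\min}(A)I_n$, lift $\|x\|^2$ to $t$, invoke Theorem \ref{mainthm}, and use $\lambda_{\min}(A)<0$ to force $t$ to its upper bound). The one genuine difference is your preliminary shift $y=x-x_0$. The paper skips it: it applies Theorem \ref{mainthm} directly to (ETRS), forming the lifted problem (ER) with the unshifted constraint $t-2x^Tx_0+\|x_0\|^2\leq u$, asserts that (\ref{etrs}) ``coincides with'' (\ref{c}), and verifies $v({\rm ER})>-\infty$ via Cauchy--Schwarz from $\|x\|^2\leq t\leq u-\|x_0\|^2+2\|x_0\|\|x\|$. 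Your shift buys two things. First, after centering, the ball constraint carries no linear term, so the span in condition (\ref{c}) is exactly ${\rm span}\{b_1,\ldots,b_p\}$ and (\ref{etrs}) is recovered verbatim; in the unshifted formulation the ball constraint contributes the vector $x_0$ to that span, so a literal reading of (\ref{c}) would additionally require $x_0\in{\rm span}\{b_1,\ldots,b_p\}\cup\mathcal R(A-\lambda_{\min}(A)I_n)$ --- the paper's ``easy to verify'' step quietly presupposes the translation you make explicit. Second, finiteness of the lifted problem becomes trivial ($t\leq u$ and $\|y\|^2\leq t$ give a compact feasible set), replacing the paper's Cauchy--Schwarz estimate; you should still state this one line, since Theorem \ref{mainthm} is conditional on $v({\rm CR})>-\infty$. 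The price is the back-substitution bookkeeping, which you have identified correctly and which does close: $-2x_0^T(A-\lambda_{\min}(A)I_n)x+2x_0^TAx=2\lambda_{\min}(A)x_0^Tx$ and the constants collapse to $\lambda_{\min}(A)(u-\|x_0\|^2)$, yielding exactly the stated objective.
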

\begin{proof}
Similar to the proof of Corollary \ref{cor:1}, it is sufficient to show that (ETRS) is equivalent to
 \begin{eqnarray*}
{\rm (ER)}~~&\min & x^T(A-\lambda_{\min}(A) I_{n})x+a^Tx+\lambda_{\min}(A)t\nonumber\\
&{\rm s.~t.}&  t-2x^Tx_{0}+\|x_{0}\|^2\leq u,\label{o}\\
&& b_{i}^Tx\leq \beta_{i},~i=1,\ldots,p,\nonumber\\
&& x^Tx\leq t.\nonumber
\end{eqnarray*}
Firstly, it is easy to verify that the assumption (\ref{etrs}) coincides with (\ref{c}). To show $v{\rm (ER)}>-\infty$, it is sufficient to prove the feasible region of ${\rm (ER)}$ is bounded. According to the Cauchy-Schwartz inequality, for any feasible solution of (ER), denoted by $(x,t)$, we have
\[
\|x\|^2\leq t\leq u-\|x_{0}\|^2 +2x^Tx_{0} \leq u-\|x_{0}\|^2 +2\|x_{0}\|\|x\|.
\]
Therefore, both $\|x\|$ and $t$ are bounded. Now, the equivalence between (ETRS) and (ER) directly follows from Theorem \ref{mainthm}.
\end{proof}
\begin{remark}
The assumption (\ref{etrs}) was first proposed by
Hsia and Sheu \cite{HS} to guarantee that $v{\rm (ETRS)}$ is equal to the optimal objective value of the primal SDP relaxation of ${\rm (ETRS)}$. It improved the following assumption due to Jeyakumar and Li \cite{JL}
\[
{\rm dim}\left\{\mathcal N(A-\lambda_{\min}(A)I_n)\right\}\ge{\rm dim}\left\{{\rm span}\left\{b_1,\ldots,b_p\right\}\right\}+1.
\]
\end{remark}

Now we focus on the weighted max-min dispersion problem \cite{Hai,Hai2}:
\[
\max_{x\in \Omega}\min_{i=1,\ldots,p}\omega_{i}\|x-z_{i}\|^2,
\]
where $\Omega\subseteq \Bbb R^n$ is closed, $z_{1},\ldots, z_{p}$ in $\Bbb R^n$ are given points and $\omega_{1},\ldots, \omega_{p}$ are positive weights.
When $\Omega$ is a ball with a radius $r_0$ centering at $x_0$ , the max-min dispersion problem
can be reformulated as a special case of (QCQP):
\begin{eqnarray*}
(\rm WD)~~&\max&s\\
&{\rm s.~t.}&s\leq\omega_{i}\|x-z_{i}\|^2,~i=1,\ldots,p,\\
&&\|x-x_{0}\|^2\leq r_{0}^2.
\end{eqnarray*}
An approximation algorithm for (WD) was proposed in \cite{Hai,Hai2} based on the following SDP relaxation:
\begin{eqnarray}
(\rm WD\text{-}SDP)~~&\max& s\nonumber\\
&{\rm s.~t.}&s\leq\omega_{i} \left(\begin{array}{cc}I&-z_{i}\\-z_{i}^T&\|z_{i}\|^2\\\end{array}\right)\bullet \left(\begin{array}{cc}Y&x\\x^T&1\\\end{array}\right),~i=1,\ldots,p,\label{wd:sdp1}\\
&&  \left(\begin{array}{cc}I&-x_{0}\\-x_{0}^T&\|x_{0}\|^2\\\end{array}\right)\bullet  \left(\begin{array}{cc}Y&x\\x^T&1\\\end{array}\right) \leq r_{0}^2,\label{wd:sdp2}\\
&& \left(\begin{array}{cc}Y&x\\x^T&1\\\end{array}\right)\succeq 0,\label{wd:sdp3}
\end{eqnarray}
where $Y$ is relaxed from $xx^T$.
It was raised as an open question in \cite{Hai,Hai2} that
under what conditions the SDP relaxation of (WD)
is tight, i.e., $v({\rm WD})=v(\rm WD\text{-}SDP)$.
Here, we can quickly give an answer based on Theorem \ref {mainthm}.
\begin{cor}
Suppose
\begin{equation}
{\rm rank}\left[z_{1}-x_0,\ldots,z_{p}-x_0\right]\leq n-1,\label{wd}
\end{equation}
$({\rm WD})$ is equivalent to the following linear and ball constrained convex quadratic programming relaxation:
 \begin{eqnarray}
(\rm WR)~~&\max& s \nonumber\\
&{\rm s.~t.}&
s\leq\omega_{i}(r_0^2-2(z_{i}-x_0)^Ty+\|z_{i}-x_0\|^2),~i=1,\ldots,p,\label{wr:1}\\
&&\|y\|\leq r_{0}.\label{wr:2}
\end{eqnarray}
Moverover, under the assumption int$(\Omega)\neq\emptyset$, i.e., there exists an $\overline{x}$ such that $\|\overline{x}-x_0\|<r_0$, we always have $v({\rm WR})=v(\rm WD\text{-}SDP)$.
\end{cor}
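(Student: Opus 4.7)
The plan is to prove the two claimed equalities in turn, combining a direct rank-based perturbation argument with an explicit lifting correspondence between $({\rm WR})$ and $({\rm WD\text{-}SDP})$.

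For the first, I translate via $y=x-x_{0}$ and write $\tilde z_{i}=z_{i}-x_{0}$, so that $({\rm WD})$ becomes: maximize $s$ subject to $s\le\omega_{i}(\|y\|^{2}-2\tilde z_{i}^{T}y+\|\tilde z_{i}\|^{2})$ for $i=1,\ldots,p$ and $\|y\|^{2}\le r_{0}^{2}$. Replacing $\|y\|^{2}$ by its upper bound $r_{0}^{2}$ in each $s$-constraint produces a valid convex relaxation which, after the identification $y=x-x_{0}$, is precisely $({\rm WR})$; hence $v({\rm WD})\le v({\rm WR})$. For the reverse inequality, let $(y^{*},s^{*})$ be an optimal solution of $({\rm WR})$. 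If $\|y^{*}\|=r_{0}$, then $x^{*}=y^{*}+x_{0}$ is feasible for $({\rm WD})$ with the same objective. Otherwise $\|y^{*}\|<r_{0}$, and the rank hypothesis (\ref{wd}) furnishes a nonzero vector $\tilde b$ with $\tilde z_{i}^{T}\tilde b=0$ for every $i$. Mirroring the moving-direction step in the proof of Theorem~\ref{mainthm}, the perturbed point $y(\alpha):=y^{*}+\alpha\tilde b$ leaves the right-hand side of each constraint (\ref{wr:1}) unchanged, while $\|y(\alpha)\|^{2}$ is continuous in $\alpha$, equals $\|y^{*}\|^{2}<r_{0}^{2}$ at $\alpha=0$, and tends to $+\infty$ as $|\alpha|\to\infty$; by the intermediate value theorem some $\alpha_{0}$ yields $\|y(\alpha_{0})\|=r_{0}$, and then $y(\alpha_{0})+x_{0}$ is feasible for $({\rm WD})$ with value $s^{*}$, so $v({\rm WD})\ge v({\rm WR})$.

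For the second equality I exhibit an explicit correspondence between feasible points. Given a feasible $(y,s)$ of $({\rm WR})$, set $x:=y+x_{0}$ and $Y:=xx^{T}+\frac{r_{0}^{2}-\|y\|^{2}}{n}I_{n}$. Since $r_{0}^{2}-\|y\|^{2}\ge 0$, we have $Y\succeq xx^{T}$, so (\ref{wd:sdp3}) holds; using $\|x\|^{2}=\|y\|^{2}+2x_{0}^{T}x-\|x_{0}\|^{2}$ a direct computation gives ${\rm tr}(Y)=r_{0}^{2}+2x_{0}^{T}x-\|x_{0}\|^{2}$, so that (\ref{wd:sdp2}) holds with equality. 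The algebraic identity $r_{0}^{2}+2x_{0}^{T}x-\|x_{0}\|^{2}-2z_{i}^{T}x+\|z_{i}\|^{2}=r_{0}^{2}-2\tilde z_{i}^{T}y+\|\tilde z_{i}\|^{2}$ then reduces (\ref{wd:sdp1}) exactly to (\ref{wr:1}). Conversely, given a feasible $(Y,x,s)$ of $({\rm WD\text{-}SDP})$, set $y:=x-x_{0}$; then $Y\succeq xx^{T}$ implies ${\rm tr}(Y)\ge\|x\|^{2}$, which together with (\ref{wd:sdp2}) yields $\|y\|^{2}\le r_{0}^{2}$ (i.e., (\ref{wr:2})), and combining (\ref{wd:sdp1}) with the upper bound on ${\rm tr}(Y)$ from (\ref{wd:sdp2}) recovers (\ref{wr:1}) via the same identity. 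Both correspondences preserve the objective $s$, so $v({\rm WR})=v({\rm WD\text{-}SDP})$; the Slater assumption ${\rm int}(\Omega)\ne\emptyset$ guarantees that both convex problems are well-posed with finite, attained optima, making this chain of identifications unambiguous.

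The most delicate step will be the pair of algebraic identities matching the lifted $({\rm WD\text{-}SDP})$ data with the translated $({\rm WR})$ data, together with the verification that the particular choice $Y=xx^{T}+\frac{r_{0}^{2}-\|y\|^{2}}{n}I_{n}$ remains positive semidefinite and hits (\ref{wd:sdp2}) with equality. The rank-based perturbation and the Schur-complement extraction of $\|y\|\le r_{0}$ are both routine instances of arguments already developed in Theorem~\ref{mainthm} and in standard SDP-SOCP lifting.
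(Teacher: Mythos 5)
Your proof is correct, but it departs from the paper's argument in both halves, and in one respect it is genuinely stronger. For the first equality the paper does not redo the perturbation step: it checks that hypothesis (\ref{wd}) instantiates condition (\ref{c}), invokes Theorem \ref{mainthm} to pass to an intermediate relaxation with an extra variable $t$ (constrained by $t\le r_0^2$ and $y^Ty\le t$), and then pushes $t$ up to $r_0^2$ using $\omega_i>0$; your direct intermediate-value argument along a direction orthogonal to all $z_i-x_0$ is the same mechanism unpacked, and has the advantage of being self-contained. The more substantial divergence is in the second equality. The paper proves $v({\rm WR})=v({\rm WD\text{-}SDP})$ through a duality chain: Slater for the translated convex program gives strong Lagrangian duality, compactness plus Slater for the translated SDP gives strong conic duality, the two dual problems are identified, and a final change of variables relates the translated and untranslated SDPs. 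You instead exhibit an explicit objective-preserving correspondence of feasible points in both directions --- padding $Y=xx^T+\frac{r_0^2-\|y\|^2}{n}I_n$ so that (\ref{wd:sdp2}) is tight and (\ref{wd:sdp1}) collapses to (\ref{wr:1}) in one direction, and using ${\rm tr}(Y)\ge\|x\|^2$ together with the bound on ${\rm tr}(Y)$ from (\ref{wd:sdp2}) in the other. This is more elementary, and it shows that the interiority assumption ${\rm int}(\Omega)\neq\emptyset$ is not actually needed for $v({\rm WR})=v({\rm WD\text{-}SDP})$; in your write-up Slater is invoked only for well-posedness and attainment, which already follow from compactness of both feasible regions. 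The algebraic identities you flag as delicate all check out, so each approach buys something: the paper's reuses its general machinery, while yours isolates exactly which hypotheses each conclusion requires.
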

\begin{proof}
Introducing $y:=x-x_0$, we see that (WD) is equivalent to
\begin{eqnarray*}
(\rm WD')~~&\max&s\\
&{\rm s.~t.}&s\leq\omega_{i}\|y-(z_{i}-x_0)\|^2,~i=1,\ldots,p,\\
&&\|y\|^2\leq r_{0}^2.
\end{eqnarray*}
For $({\rm WD'})$,
it is trivial to verify that Assumption (\ref{c}) reduces to (\ref{wd}).
According to Theorem \ref{mainthm}, $({\rm WD'})$ is equivalent to the following convex relaxation
 \begin{eqnarray*}
(\rm WR')~~&\max& s \nonumber\\
&{\rm s.~t.}&
s\leq\omega_{i}(t-2(z_{i}-x_0)^Ty+\|z_{i}-x_0\|^2),~i=1,\ldots,p,\label{wr:1}\\
&&t\leq r_{0}^2, \\
&& y^Ty\leq t,
\end{eqnarray*}
by noting that the feasible region of $(\rm WR')$ is clearly bounded.
Let $(y,t,s)$ be any feasible solution of $(\rm WR')$. $(y,r_{0}^2,s)$ remains feasible since $\omega_{i}>0$ for $i=1,\ldots,p$. Therefore, we always have
$v(\rm WR')=v(\rm WR)$.

The assumption int$(\Omega)\neq\emptyset$ implies that
the Slater condition holds for $({\rm WR'})$. Since $v{\rm (WR')}<+\infty$, according to
Proposition 6.5.6 in \cite{Bert}, strong duality holds for $({\rm WR'})$ and its Lagrangian dual.

Consider the SDP relaxation of $({\rm WR'})$:
\begin{eqnarray*}
(\rm WD\text{-}SDP')~~&\max& s\nonumber\\
&{\rm s.~t.}&s\leq\omega_{i} \left(\begin{array}{cc}I&x_0-z_{i}\\x_0^T-z_{i}^T&\|z_{i}-x_0\|^2\\\end{array}\right)\bullet \left(\begin{array}{cc}Z&y\\y^T&1\\\end{array}\right),~i=1,\ldots,p,\\
&&  \left(\begin{array}{cc}I&0\\0&0\\\end{array}\right)\bullet  \left(\begin{array}{cc}Z&y\\y^T&1\\\end{array}\right) \leq r_{0}^2, \\
&& \left(\begin{array}{cc}Z&y\\y^T&1\\\end{array}\right)\succeq 0.
\end{eqnarray*}
Let $(Z,y,s)$ be any feasible solution of $(\rm WD\text{-}SDP')$. It follows from Tr$(Z)=I\bullet Z\le r_{0}^2$ and $Z-yy^T\succeq 0$ that $\|y\|$ is bounded since
\[
y^Ty \le {\rm Tr}(Z)\leq r_{0}^2.
\]
Then, according to the first constraint of $(\rm WD\text{-}SDP')$, $s$ is bounded from above. Note that $Z\succeq 0$ implies that
\begin{eqnarray*}
&&Z_{ii}\ge 0,~i=1,\dots,n,\\
&&Z_{ii}Z_{jj}\ge Z_{ij}^2,~i,j=1,\dots,n.
\end{eqnarray*}
That is, each entry of $Z$ is bounded. Then the feasible region of $(\rm WD\text{-}SDP')$ is compact. Under the assumption int$(\Omega)\neq\emptyset$, the Slater condition holds for $(\rm WD\text{-}SDP')$. Therefore, strong duality holds for  $(\rm WD\text{-}SDP')$ and its conic dual, see for example \cite{V}.

It is not difficult to verify that the Lagrangian dual of $({\rm WR'})$ is the same as the conic dual of $(\rm WD\text{-}SDP')$. We have $v(\rm WD\text{-}SDP')=v({\rm WR'})$.

Let $(Y,x,s)$ is a feasible solution of $(\rm WD\text{-}SDP)$. We can verify that $(Z,y,s):=(Y-x_0x^T-xx_0^T+x_0x_0^T,x-x_0,s)$ is a feasible solution of $(\rm WD\text{-}SDP')$. On the other hand, for any feasible solution $(\rm WD\text{-}SDP')$, say, $(Z,y,s)$, $(Y,x,s):=(Z+x_0y^T+yx_0^T+x_0x_0^T,y+x_0,s))$ is a feasible solution of $(\rm WD\text{-}SDP)$. Therefore, $v(\rm WD\text{-}SDP')=v(\rm WD\text{-}SDP)$.

As a conclusion, we obtain
\[
v({\rm WR})=v({\rm WR'})=v(\rm WD\text{-}SDP')=v(\rm WD\text{-}SDP).
\]
\end{proof}

\subsection{Two-Sided QCQP}
Now we focus on the general two-sided (QCQP) (\ref{qcqp:1})-(\ref{qcqp:2}). Our second main result is as follows.
\begin{thm}\label{mainthm2}
Suppose
\begin{eqnarray}
\max_{j\in K}\left\{ {\rm dim}\left\{{\rm span}\left\{b_{1},\ldots,b_{p}\right\}\bigcup \mathcal N(Q_{j})\bigcup_{i\neq j} \mathcal R(Q_{i})\right\}\right\}\leq n-1,\label{cc}
\end{eqnarray}
the problem (QCQP) is equivalent to the following linear and second-order cone constrained convex quadratic programming relaxation:
\begin{eqnarray*}
{\rm (CR')}~~&\min & \sum_{j\not\in K}x^TQ_{j}x+\sum_{j\in K}a_{0j}t_{j}+2b_{0}^Tx+c_{0}\\
&{\rm s.~t.}& l_{i}\leq\sum_{j\in K}a_{ij}t_{j}+2b_{i}^Tx+c_{i}\leq u_{i},~i=1,\ldots,p,\\
&& \left\|\left(\begin{array}{c} Q_j^{\frac{1}{2}}x \\ \frac{t_j-1}{2}\end{array}\right)\right\| \le  \frac{t_j+1}{2},~{j\in J},~{j\in K},
\end{eqnarray*}
in the sense
that  $x^*$  globally solves $({\rm QCQP})$ if and only if $(x^*,t_j^*(j\in K)):=(x^*,x^{*T}Q_jx^*(j\in K))$ globally  solves $({\rm CR'})$,
under the assumption $v({\rm CR'})>-\infty$, where
\[
K=\bigcup  \left\{j\in\{1,\ldots,m\}\mid~a_{0j}=-1~or~ a_{ij}\neq 0~for~some~i\in\{1,\ldots,p\}\right\}.
\]
\end{thm}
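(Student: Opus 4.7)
The plan is to parallel the proof of Theorem~\ref{mainthm}, with the extra (but essentially free) bookkeeping that both the lower and the upper bound of each two-sided constraint of $({\rm CR'})$ must be preserved under the perturbation. Since $({\rm CR'})$ is a relaxation of $({\rm QCQP})$ obtained by replacing $x^TQ_jx$ with $t_j\ge x^TQ_jx$ for $j\in K$, we have $v({\rm CR'})\le v({\rm QCQP})$, so it suffices to construct a feasible point of $({\rm QCQP})$ whose objective value equals $v({\rm CR'})$. Because $v({\rm CR'})>-\infty$ and the feasible region is closed, let $(x^*,t_j^*(j\in K))$ be any optimal solution. The SOCP constraint is equivalent to $x^TQ_jx\le t_j$; if it is tight at $(x^*,t^*)$ for every $j\in K$, then $x^*$ is feasible for $({\rm QCQP})$ with value $v({\rm CR'})$ and we are done.

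Otherwise pick $j_0\in K$ with $x^{*T}Q_{j_0}x^*<t_{j_0}^*$. Taking orthogonal complements in (\ref{cc}), via $\mathcal N^{\perp}(Q)=\mathcal R(Q)$ and $(A\cup B)^{\perp}=A^{\perp}\cap B^{\perp}$, yields a nonzero vector
\[
x_0\in\bigl({\rm span}\{b_1,\ldots,b_p\}\bigr)^{\perp}\cap\mathcal R(Q_{j_0})\cap\bigcap_{i\ne j_0}\mathcal N(Q_i).
\]
Since $\mathcal R(Q_{j_0})\cap\mathcal N(Q_{j_0})=\{0\}$, we have $x_0^TQ_{j_0}x_0>0$, so the strictly convex quadratic $h(\alpha):=(x^*+\alpha x_0)^TQ_{j_0}(x^*+\alpha x_0)$ with $h(0)<t_{j_0}^*$ attains $t_{j_0}^*$ at two real roots $\alpha_0^{\pm}$, whose product $(x^{*T}Q_{j_0}x^*-t_{j_0}^*)/(x_0^TQ_{j_0}x_0)$ is negative, hence the roots have opposite sign. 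Set $\widetilde x=x^*+\alpha_0 x_0$ for either root. The identities $b_i^Tx_0=0$ for $i=1,\ldots,p$ and $Q_ix_0=0$ for $i\in\{1,\ldots,m\}\setminus\{j_0\}$ leave every affine expression $\sum_{j\in K}a_{ij}t_j+2b_i^Tx+c_i$ invariant (so both the $l_i$ and the $u_i$ bound continue to hold) and leave every quadratic term $x^TQ_jx$ in the objective with $j\notin K$ unchanged, so $(\widetilde x,t^*)$ is feasible for $({\rm CR'})$.

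The $({\rm CR'})$ objective at $(\widetilde x,t^*)$ differs from $v({\rm CR'})$ by $2\alpha_0 b_0^Tx_0$; if $b_0^Tx_0\ne 0$, the opposite-sign choice of $\alpha_0^{\pm}$ would produce a feasible point with strictly smaller objective, contradicting optimality. Hence $b_0^Tx_0=0$ and $(\widetilde x,t^*)$ remains optimal, with the $j_0$-th SOCP constraint now tight and all previously tight constraints still tight (because $Q_jx_0=0$ for $j\ne j_0$). Iterating this step at most $|K|$ times produces an optimizer of $({\rm CR'})$ at which every SOCP constraint is active, which is the sought feasible point of $({\rm QCQP})$ attaining $v({\rm CR'})$; $v({\rm QCQP})=v({\rm CR'})$ follows, and the claimed bijection between optimizers is then immediate. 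I expect the chief subtlety to be the objective-preservation step $b_0^Tx_0=0$: it relies on the availability of two opposite-sign roots of $h(\alpha)=t_{j_0}^*$ together with the independently verified feasibility of both candidates $\widetilde x^{\pm}$, so that a further decrease in the objective would violate the optimality of $(x^*,t^*)$; all other verifications reduce to the same orthogonality relations already used in the one-sided case.
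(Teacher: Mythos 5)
Your proof is correct and follows essentially the same route the paper uses: the paper states Theorem~\ref{mainthm2} without a separate proof, relying on the argument of Theorem~\ref{mainthm}, and your adaptation (null-space/range-space perturbation $x_0$, activating the slack SOCP constraint $j_0$) is exactly that argument carried over to the two-sided setting. You are in fact slightly more careful than the paper in two places --- the explicit opposite-sign-roots argument forcing $b_0^Tx_0=0$ so the objective is preserved, and the iteration over at most $|K|$ slack constraints --- both of which are needed and correct.
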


Our first application of Theorem \ref{mainthm2} is for the general problem (UQ) rather than (UQ$_+$). We first consider the positive semidefinite case.
\begin{cor}
Suppose $Q\succeq0$  and
\[
{\rm rank}\left[{b}_{1},\ldots,{b}_{p}\right]\leq {\rm rank}(Q)-1,
\]
The problem ${\rm(UQ)}$ is equivalent to ${\rm(SOCP)}$ (\ref{RQ}) in the sense that  $x^*$  globally solves ${\rm(UQ)}$ if and only if $(x^*,t^*):=(x^*,x^{*T}x^*)$ globally  solves ${\rm(SOCP)}$ (\ref{RQ}).
\end{cor}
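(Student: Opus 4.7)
The plan is to reduce this corollary to a direct application of Theorem~\ref{mainthm2}. I would encode (UQ) into the template (\ref{qcqp:1})--(\ref{qcqp:2}) with $m = 1$ and $Q_1 := Q \succeq 0$: switch the maximization to a minimization by setting $g_0 := -f_0$, so $a_{0,1} = -1$, and keep the two-sided constraint functions $g_i := f_i$, giving $a_{i,1} = 1$ for $i = 1,\ldots,p$. Under this encoding the index set of Theorem~\ref{mainthm2} is simply $K = \{1\}$, because $a_{0,1} = -1$ already forces $1 \in K$.

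Next I would verify that hypothesis (\ref{cc}) reduces to the corollary's rank condition. With $m = 1$ the inner union in (\ref{cc}) contains no ranges $\mathcal{R}(Q_i)$ for $i \neq j$, so (\ref{cc}) merely demands
\[
\dim\bigl(\mathrm{span}\{b_1,\ldots,b_p\} + \mathcal{N}(Q)\bigr) \leq n-1.
\]
By subadditivity of dimension the left-hand side is bounded above by $\mathrm{rank}[b_1,\ldots,b_p] + (n - \mathrm{rank}(Q))$, which is at most $n - 1$ precisely under the corollary's hypothesis $\mathrm{rank}[b_1,\ldots,b_p] \leq \mathrm{rank}(Q) - 1$.

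It then remains to match the convex relaxation (CR$'$) produced by Theorem~\ref{mainthm2} with (SOCP) in (\ref{RQ})--(\ref{soc}). With $K = \{1\}$ there is a single auxiliary variable $t := t_1$; the objective $a_{0,1} t + 2(-b_0)^T x + (-d_0)$ of the minimization form flips back, after negation, to $t + 2 b_0^T x + d_0$, and both the two-sided linear constraints and the single SOC constraint $\|(Q^{1/2}x,(t-1)/2)\| \le (t+1)/2$ coincide verbatim with those of (SOCP). The correspondence $t^* = x^{*T} Q x^*$ is exactly what Theorem~\ref{mainthm2} asserts, so the equivalence follows. The main subtlety I would handle carefully is the finiteness hypothesis $v(\mathrm{CR}') > -\infty$ required by Theorem~\ref{mainthm2}, i.e.\ $v(\mathrm{SOCP}) < +\infty$ in the maximization formulation: unlike in the $Q \succ 0$ setting of Theorem~\ref{thm1}, here $Q$ is only positive semidefinite, so the SOC constraint does not bound $x$ along $\mathcal{N}(Q)$, and any argument that a finite $v(\mathrm{UQ})$ implies a finite $v(\mathrm{SOCP})$ must combine the linear constraints with the rank hypothesis, in the spirit of the unbounded-case opening of the proof of Theorem~\ref{thm1}.
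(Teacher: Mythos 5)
Your proposal is correct and is exactly the argument the paper intends: the corollary is stated as an immediate application of Theorem~\ref{mainthm2} with $m=1$, $Q_1=Q$, $K=\{1\}$, and no separate proof is supplied, so your verification that condition (\ref{cc}) collapses to $\dim\bigl(\mathrm{span}\{b_1,\ldots,b_p\}+\mathcal{N}(Q)\bigr)\le \mathrm{rank}[b_1,\ldots,b_p]+n-\mathrm{rank}(Q)\le n-1$ and your matching of $(\mathrm{CR}')$ with $(\mathrm{SOCP})$ constitute precisely the intended reduction. The finiteness caveat you flag is genuine, but it is one the paper itself suppresses (the hypothesis $v(\mathrm{CR}')>-\infty$ of Theorem~\ref{mainthm2} is silently dropped from the corollary's statement), so on this point you are, if anything, more careful than the source.
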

Then we study the indefinite case of (UQ). Let $Q=Q_{1}-Q_{2}$ be a decomposition such that $Q_1\succeq 0$, $Q_2\succeq 0$, rank$(Q_1)=r_1$, and rank$(Q_2)=r_2$. We can reformulate
the indefinite (UQ) as:
\begin{eqnarray*}
&\max & x^TQ_{1}x-x^TQ_{2}x+2b_{0}^Tx+d_{0}\\
&{\rm s.~t.}& l_{i}\leq x^TQ_{1}x-x^TQ_{2}x+2b_{i}^Tx\leq u_{i},~i=1,\ldots,p.
\end{eqnarray*}
Applying Theorem \ref{mainthm2} yields the following result.
\begin{cor}
Suppose
\[
{\rm rank}\left[{b}_{1},\ldots,{b}_{p}\right]\leq \min\{r_{1},r_{2}\}-1,
\]
the indefinite ${\rm(UQ)}$ is equivalent to the following  second-order cone programming relaxation:
\begin{eqnarray*}
{\rm (SOCP')}~~&\max_{x\in \Bbb R^n}& t_{1}-t_{2}+2b_{0}^Tx+d_{0}\\
&{\rm s.~t.}& l_{i}\leq t_{1}-t_{2}+2b_{i}^Tx\leq u_{i},~i=1,\ldots,p,\label{r15}\\
&& \left\|\left(\begin{array}{c} Q_1^{\frac{1}{2}}x\\ \frac{t_1-1}{2}\end{array}\right)\right\| \le  \frac{t_1+1}{2},\\
&& \left\|\left(\begin{array}{c} Q_2^{\frac{1}{2}}x\\ \frac{t_2-1}{2}\end{array}\right)\right\| \le  \frac{t_2+1}{2},
\end{eqnarray*}
in the sense that  $x^*$  globally solves ${\rm(UQ)}$ if and only if
\[
(x^*,t_1^*,t_2^*):=(x^*,x^{*T}Q_1x^*,x^{*T}Q_2x^*)
\]
globally  solves ${\rm (SOCP')}$.
\end{cor}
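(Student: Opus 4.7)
The plan is to reduce the claim to Theorem~\ref{mainthm2}. First I would cast the indefinite ${\rm(UQ)}$ into the framework (\ref{qcqp:1})--(\ref{qcqp:2}) with $m=2$ quadratic blocks and coefficients $a_{01}=+1,\,a_{02}=-1,\,a_{i1}=+1,\,a_{i2}=-1$ for $i=1,\ldots,p$ (after negating the objective to fit the minimization form used in the theorem). Since $a_{02}=-1$ in the (negated) objective and both $a_{ij}$ are nonzero in each constraint $i\ge 1$, the index set $K$ of Theorem~\ref{mainthm2} equals $\{1,2\}$.

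The crux is verifying the dimension hypothesis (\ref{cc}). I would invoke the spectral decomposition of $Q$: diagonalize $Q=V\Lambda V^T$ and set $Q_1=V\Lambda_+ V^T$, $Q_2=-V\Lambda_- V^T$, so that $r_1$ and $r_2$ coincide with the numbers of positive and negative eigenvalues of $Q$, respectively. Under this choice the ranges satisfy $\mathcal R(Q_1)\perp\mathcal R(Q_2)$, so $\mathcal R(Q_2)\subseteq\mathcal N(Q_1)$ and, symmetrically, $\mathcal R(Q_1)\subseteq\mathcal N(Q_2)$. Consequently, for $j=1$ the set appearing in (\ref{cc}) collapses to ${\rm span}\{b_1,\ldots,b_p\}\cup\mathcal N(Q_1)$, whose linear span has dimension at most
\[
{\rm rank}[b_1,\ldots,b_p]+(n-r_1)\le(\min\{r_1,r_2\}-1)+(n-r_1)\le n-1.
\]
The case $j=2$ is handled symmetrically, so (\ref{cc}) holds.

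With (\ref{cc}) established, I would apply Theorem~\ref{mainthm2} directly. Its finiteness hypothesis $v({\rm CR'})>-\infty$ translates, after the max/min flip, to $v({\rm SOCP'})<+\infty$, which may be taken for granted (or derived from a mild assumption $v({\rm UQ})<+\infty$: any ${\rm(UQ)}$-feasible point lifts to an ${\rm(SOCP')}$-feasible point of the same objective value, while boundedness on the other side is enforced by the linear two-sided constraints on $t_1-t_2$). Theorem~\ref{mainthm2} then delivers the stated equivalence: a global maximizer $x^*$ of ${\rm(UQ)}$ lifts to $(x^*,x^{*T}Q_1x^*,x^{*T}Q_2x^*)$, a global maximizer of ${\rm(SOCP')}$; conversely, any global maximizer of ${\rm(SOCP')}$ can be perturbed in the direction furnished inside the proof of Theorem~\ref{mainthm2} until both second-order cone constraints become tight, recovering a global maximizer of ${\rm(UQ)}$.

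The main obstacle is precisely the dimension count above: for a generic PSD splitting $Q=Q_1-Q_2$ the inclusion $\mathcal R(Q_2)\subseteq\mathcal N(Q_1)$ need not hold and the bound in (\ref{cc}) can exceed $n-1$ even under ${\rm rank}[b_1,\ldots,b_p]\le\min\{r_1,r_2\}-1$. Thus the substantive step is the reduction to (or free choice of) the spectral decomposition, whose orthogonality is exactly what collapses the general hypothesis (\ref{cc}) of Theorem~\ref{mainthm2} to the clean rank condition stated in the corollary.
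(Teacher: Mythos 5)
Your proposal is correct and follows the route the paper intends: the paper offers no written proof of this corollary, saying only that it follows by ``applying Theorem~\ref{mainthm2},'' and your argument supplies exactly the missing verification (identifying $K=\{1,2\}$ and checking the dimension hypothesis (\ref{cc})). More importantly, your closing caveat identifies a real defect in the paper's statement rather than in your proof: the paper introduces $Q=Q_1-Q_2$ as an \emph{arbitrary} decomposition with $Q_1,Q_2\succeq 0$, and for such a splitting the corollary can fail. For instance, with $Q={\rm diag}(1,-1)$, $Q_1={\rm diag}(2,1)$, $Q_2={\rm diag}(1,2)$ one has $r_1=r_2=2$, so the rank hypothesis permits ${\rm rank}[b_1,\ldots,b_p]=1$, yet $\mathcal R(Q_1)\cap\mathcal N(Q_2)=\{0\}$ and (\ref{cc}) cannot hold. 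Your reduction to the spectral splitting $Q_1=V\Lambda_+V^T$, $Q_2=-V\Lambda_-V^T$ (or any splitting with $\mathcal R(Q_2)\subseteq\mathcal N(Q_1)$ and $\mathcal R(Q_1)\subseteq\mathcal N(Q_2)$) is precisely what makes the union in (\ref{cc}) collapse so that the clean bound ${\rm rank}[b_1,\ldots,b_p]+(n-r_j)\le n-1$ goes through; this restriction should be read into (or added to) the corollary. Your handling of the finiteness hypothesis $v({\rm CR'})>-\infty$, which the corollary also silently omits, is likewise appropriate.
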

The two-sided ball constrained trust region subproblem was first proposed by Stern and Wolkowicz \cite{SW}, see also \cite{Ben96,Pong}:
\begin{eqnarray*}
({\rm TTRS})~~&\min & f(x)=\frac{1}{2}x^TAx+b^Tx  \\
&{\rm s.t.}&\alpha\le x^Tx\le \beta, 
\end{eqnarray*}
where $A=A^T\in\Bbb R^{n\times n}$, $b\in \Bbb R^n$ and $-\infty<\alpha<\beta<+\infty$. Directly applying Theorem \ref{mainthm2}, we have:
\begin{cor} 
The problem $({\rm TTRS})$ is equivalent to the following linear and second-order cone constrained convex quadratic programming relaxation:
\begin{eqnarray*}
({\rm CTTRS})~~&\min& \frac{1}{2}x^T(A-\lambda_{\min}(A) I)x+b^Tx+\lambda_{\min}(A) t \\
&{\rm s.t.}&\alpha\le t\le \beta, \\
&&\left\|\left(\begin{array}{c} x\\ \frac{t-1}{2}\end{array}\right)\right\| \le  \frac{t+1}{2},
\end{eqnarray*}
in the sense
that  $x^*$  globally solves $({\rm TTRS})$ if and only if $(x^*,t^*):=(x^*,x^{*T}x^*)$ globally  solves $({\rm CTTRS})$.
\end{cor}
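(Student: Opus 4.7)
The strategy is to recognize $({\rm TTRS})$ as an instance of $({\rm QCQP})$ to which Theorem~\ref{mainthm2} directly applies, using the same eigenvalue-shift reformulation that drove Corollary~\ref{cor:1}. Concretely, I decompose $A = (A - \lambda_{\min}(A) I_n) + \lambda_{\min}(A) I_n$, set $Q_1 := A - \lambda_{\min}(A) I_n \succeq 0$ and $Q_2 := I_n \succeq 0$, and rewrite the objective of $({\rm TTRS})$ as $\tfrac{1}{2} x^T Q_1 x + \tfrac{1}{2}\lambda_{\min}(A)\, x^T Q_2 x + b^T x$. The two-sided ball constraint becomes $\alpha \le x^T Q_2 x \le \beta$, so $p = 1$ with $b_1 = 0$. (The coefficient $\tfrac{1}{2}\lambda_{\min}(A)$ need not lie in $\{-1,0,1\}$, but, just as in Corollary~\ref{cor:1} where the scalar $\lambda_{\min}(A)$ already appeared, the proof of Theorem~\ref{mainthm2} uses only the PSD block structure and is indifferent to such real scaling.)

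\textbf{Verifying the hypotheses.} Only $Q_2$ appears in a constraint, so $K = \{2\}$ and, because there is no linear term in the single constraint, the dimension condition (\ref{cc}) collapses to
\[
\dim\bigl(\mathcal{N}(Q_2) \cup \mathcal{R}(Q_1)\bigr) \le n - 1.
\]
Here $\mathcal{N}(Q_2) = \mathcal{N}(I_n) = \{0\}$ and $Q_1 = A - \lambda_{\min}(A) I_n$ is singular since $\lambda_{\min}(A)$ is an eigenvalue of $A$, giving $\dim \mathcal{R}(Q_1) \le n - 1$. Hence (\ref{cc}) is automatic, independently of any assumption on $A$ or $b$.

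\textbf{Boundedness and conclusion.} It remains to certify $v({\rm CTTRS}) > -\infty$. The second-order cone constraint in $({\rm CTTRS})$ is equivalent to $x^T x \le t$; combined with $t \le \beta$ this gives $\|x\| \le \sqrt{\beta}$, while $\alpha \le t \le \beta$ confines $t$ to a compact interval. The feasible region of $({\rm CTTRS})$ is therefore compact, so the continuous objective attains a finite minimum. Invoking Theorem~\ref{mainthm2} with the above identifications then yields the claimed equivalence: $x^*$ globally solves $({\rm TTRS})$ if and only if $(x^*, x^{*T} x^*)$ globally solves $({\rm CTTRS})$. There is no substantive obstacle here since both hypotheses of Theorem~\ref{mainthm2} reduce to structural facts about the identity matrix and the shifted matrix $A - \lambda_{\min}(A) I_n$; the only minor bookkeeping point is tracking the scalar $\tfrac{1}{2}\lambda_{\min}(A)$ through the reduction.
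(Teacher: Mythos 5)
Your proposal is correct and follows exactly the route the paper intends: the paper offers no written proof for this corollary beyond the phrase ``Directly applying Theorem~\ref{mainthm2}'' after the decomposition $A=(A-\lambda_{\min}(A)I_n)+\lambda_{\min}(A)I_n$, and your argument is precisely that application. Your explicit checks --- that condition (\ref{cc}) reduces to $\dim\mathcal{R}(A-\lambda_{\min}(A)I_n)\le n-1$, which holds automatically because $\lambda_{\min}(A)$ is an eigenvalue, and that $x^Tx\le t\le\beta$ forces compactness so $v({\rm CTTRS})>-\infty$ --- are the right fleshing-out of the details the paper leaves implicit.
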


Very recently, Bienstock and Michalka \cite{Bi14} studied the following variants of trust region subproblem:
\begin{eqnarray*}
(\rm {VTRS})~~&\min&x^TQx+c^Tx\\
&{\rm s.~t.}&\|x-\mu_{i}\|\leq r_{i},~i\in I,\\
&&\|x-\mu_{j}\|\geq r_{j},~j\in J,\\
&& x\in P,
\end{eqnarray*}
where $P=\{x\mid~a_{k}^Tx\leq b_{k},~k=1,\ldots,m\}$. 
 It was showed in \cite{Bi14} that (VTRS) is polynomially solvable as long as the number of faces of $P$ within the ellipsoids is polynomial.

Decomposing $Q=(Q-\lambda_{\min}(Q)I_{n})+\lambda_{\min}(Q)I_{n}$
and then applying Theorem \ref {mainthm2} yields the following new sufficient condition to guarantee the hidden convexity of (VTRS):
\begin{cor}
Suppose
\begin{eqnarray*}
{\rm dim} \left\{{\rm span}\{a_{1},\ldots,a_{m},\mu_i(i\in I),\mu_j(j\in J)\}\bigcup\mathcal R(Q-\lambda_{\min}(Q)I_{n})\right\}
\leq n-1,
\end{eqnarray*}
the problem (VTRS) is equivalent to the following linear and second-order cone constrained convex quadratic programming relaxation:
\begin{eqnarray*}
&\min &x^T(Q-\lambda_{\min}(Q)I_{n})x+c^Tx+\lambda_{\min}(Q)t\\
&{\rm s.~t.}& t-2\mu_{i}^Tx+\|\mu_{i}\|^2\leq r_{i},~i\in I,\\
&& t-2\mu_{j}^Tx+\|\mu_{j}\|^2\geq r_{j},~j\in J,\\
&& \left\|\left(\begin{array}{c} x\\ \frac{t-1}{2}\end{array}\right)\right\| \le  \frac{t+1}{2},\\
&& a_{k}^Tx\leq b_{k},~k=1,\ldots,m,
\end{eqnarray*}
in the sense that  $x^*$  globally solves (VTRS) if and only if $(x^*,t^*):=(x^*,x^{*T}x^*)$ globally  solves the convex relaxation problem.
\end{cor}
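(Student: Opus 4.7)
The plan is to recast $({\rm VTRS})$ into the general form handled by Theorem \ref{mainthm2} and then invoke that theorem directly. I would first split the objective Hessian as $Q = (Q - \lambda_{\min}(Q)I_n) + \lambda_{\min}(Q)I_n$, so that $Q_1 := Q - \lambda_{\min}(Q)I_n \succeq 0$ absorbs all the convex part of $x^TQx$. Together with $Q_2 := I_n$, every quadratic term in $({\rm VTRS})$ is then built from these two positive semidefinite matrices: the objective reads $x^TQ_1x + \lambda_{\min}(Q)\, x^TQ_2x + c^Tx$, each ball constraint $\|x-\mu_\ell\|^2 \leq r_\ell$ (or $\geq r_\ell$) expands to $x^TQ_2x - 2\mu_\ell^Tx + \|\mu_\ell\|^2 \leq r_\ell$ (or $\geq r_\ell$), and the linear inequalities contribute only $b_i$-type linear data.

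Next I would read off the ingredients of hypothesis (\ref{cc}). The family $\{b_1,\ldots,b_p\}$ of linear coefficients collects the linear parts of all constraints, and therefore (up to scaling) equals $\{\mu_i(i\in I),\,\mu_j(j\in J),\,a_1,\ldots,a_m\}$. The active index set $K$ contains only the index of $Q_2$: $Q_1$ enters only the convex part of the objective with $a_{01}=+1$ and nowhere else, whereas $Q_2$ carries the (possibly negative) coefficient $\lambda_{\min}(Q)$ in the objective and nonzero coefficients in every ball constraint. For that single index, (\ref{cc}) reads $\dim\{\operatorname{span}\{b_1,\ldots,b_p\}\cup\mathcal{N}(Q_2)\cup\mathcal{R}(Q_1)\}\leq n-1$; since $Q_2$ is a nonsingular multiple of $I_n$ we have $\mathcal{N}(Q_2)=\{0\}$, so this inequality collapses to exactly the dimension hypothesis stated in the corollary.

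Theorem \ref{mainthm2} then asserts that $({\rm VTRS})$ is equivalent to the convex relaxation obtained by replacing every occurrence of $x^Tx$ with an auxiliary variable $t$ (while leaving the convex kernel $x^TQ_1x$ intact) and appending the second-order cone constraint $\|(x;\,(t-1)/2)\| \leq (t+1)/2$, i.e.\ $x^Tx \leq t$; this is precisely the displayed relaxation. The mild side condition $v({\rm CR}')>-\infty$ needed to invoke the theorem follows from finiteness of $v({\rm VTRS})$ combined with $t\geq x^Tx\geq 0$ and boundedness of $t$ from above whenever a ball constraint with $i\in I$ is present.

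The main obstacle, in my view, is purely bookkeeping rather than analytic: one must verify that the decomposition $Q=Q_1+\lambda_{\min}(Q)I_n$ produces exactly two shared $Q_j$'s across all $g_i$, pin down which of them lands in the critical set $K$, and check that the surviving single instance of (\ref{cc}) really does collapse to the compact dimension inequality in the statement. Once that translation is carried out cleanly, no further analytic ingredient beyond Theorem \ref{mainthm2} is needed.
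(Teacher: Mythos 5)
Your proposal is correct and follows exactly the route the paper takes: the paper proves this corollary only by the one-line remark that one decomposes $Q=(Q-\lambda_{\min}(Q)I_{n})+\lambda_{\min}(Q)I_{n}$ and applies Theorem \ref{mainthm2}, and your bookkeeping (identifying $Q_1$, $Q_2=I_n$ with $\mathcal N(Q_2)=\{0\}$, the set $K$, and the collapse of condition (\ref{cc}) to the stated dimension bound) is precisely the verification that remark leaves implicit.
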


Finally, we consider the extended p-regularized subproblem (p-${\rm RS_{m}}$) with $m$ inequality constraints \cite{HSY}:
\begin{eqnarray*}
({\rm p}\text{-}{\rm RS_{m}})~~&\min_{x\in \Bbb R^n}&\frac{1}{2}x^TQx+c^Tx+\frac{\sigma}{p}\|x\|^p\\
&{\rm s.~t.}&l_{i}\leq a_{i}^Tx+b_{i}\leq u_{i},~i=1,\ldots,m,
\end{eqnarray*}
where $Q=Q^T\in\Bbb R^{n\times n}$, $c,~a_{i}\in \Bbb R^n$, $b_{i},~l_{i},~u_{i}\in \Bbb R$, $\sigma>0$ and $p>2$. Hsia et al. \cite{HSY} showed that if $m$ is fixed then
$({\rm p}\text{-}{\rm RS_{m}})$ is polynomially solvable  when $p=4$. The complexity for the general $p>2$ remains unknown, see \cite{HSY}.

Again, based on the matrix decomposition $Q=(Q-\lambda_{\min}(Q)I_{n})+\lambda_{\min}(Q)I_{n}$,
we apply Theorem \ref {mainthm2} to establish a new polynomially solvable class of $({\rm p}\text{-}{\rm RS_{m}})$ for $p>2$:

\begin{cor}
Under the condition that
\begin{eqnarray*}
{\rm dim}\left\{{\rm span}\{a_{1},\ldots,a_{m}\}\bigcup\mathcal R(Q-\lambda_{\min}(Q)I_{n})\right\}\leq n-1,
\end{eqnarray*}
the problem (p-${\rm RS_{m}}$) is equivalent to the following linear and second-order cone constrained convex quadratic programming relaxation:
\begin{eqnarray*}
~~&\min &\frac{1}{2}x^T(Q-\lambda_{\min}(Q)I_{n})x+c^Tx+\frac{\sigma}{p}t^\frac{p}{2}+\lambda_{\min}t\\
&{\rm s.~t.}&l_{i}\leq a_{i}^Tx+b_{i}\leq u_{i},~i=1,\ldots,m,\\
&& \left\|\left(\begin{array}{c} x\\ \frac{t-1}{2}\end{array}\right)\right\| \le  \frac{t+1}{2}.
\end{eqnarray*}
in the sense that  $x^*$  globally solves (p-${\rm RS_{m}}$) if and only if $(x^*,t^*):=(x^*,x^{*T}x^*)$ globally  solves the convex relaxation problem.
\end{cor}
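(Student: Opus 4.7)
The proof I propose follows the lift-and-relax template of Theorem~\ref{mainthm2}, adapted to accommodate the non-quadratic objective term $\|x\|^p$. First, split $Q=(Q-\lambda_{\min}(Q)I_n)+\lambda_{\min}(Q)I_n$, where the first summand is positive semidefinite by construction, and introduce an auxiliary scalar $t=x^Tx$; then $\|x\|^p=t^{p/2}$ and $\lambda_{\min}(Q)\,x^Tx=\lambda_{\min}(Q)\,t$. Relaxing the nonconvex equality $t=x^Tx$ to $t\geq x^Tx$, which is equivalent to the stated SOC constraint, yields a convex program, since the $x$-quadratic is PSD, $t^{p/2}$ is convex for $t\geq 0$ and $p>2$, the $t$-linear term is convex, and the linear and SOC constraints are convex. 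Any feasible $x$ of $(\text{p-}\mathrm{RS}_m)$ lifts to $(x,x^Tx)$, which is feasible for the convex relaxation with the same objective value; hence the relaxation is a genuine relaxation, and the task reduces to showing that at any optimum $(x^*,t^*)$ the SOC constraint is active, i.e.\ $x^{*T}x^*=t^*$.

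Suppose for contradiction that $x^{*T}x^*<t^*$. Invoking $\mathcal R(\cdot)^\perp=\mathcal N(\cdot)$ for symmetric matrices, the dimension hypothesis becomes
\[
\dim\!\left((\mathrm{span}\{a_1,\ldots,a_m\})^\perp\cap\mathcal N(Q-\lambda_{\min}(Q)I_n)\right)\geq 1,
\]
so a nonzero $x_0$ can be chosen in this intersection. The perturbation $\tilde x(\alpha):=x^*+\alpha x_0$ preserves each $a_i^Tx$ and preserves the PSD form $x^T(Q-\lambda_{\min}(Q)I_n)x$, so $(\tilde x(\alpha),t^*)$ remains feasible for the convex relaxation as long as the SOC constraint still holds. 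The scalar quadratic $\|x_0\|^2\alpha^2+2(x^{*T}x_0)\alpha+(x^{*T}x^*-t^*)=0$ has strictly negative constant term and positive leading coefficient, so its roots $\alpha_-<0<\alpha_+$ have opposite signs, and at each of them the SOC constraint becomes active.

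The main obstacle is that the objective's linear coefficient $c$ does not appear in the dimension hypothesis, so $c^Tx_0$ need not vanish a priori. This is disposed of by exploiting the opposite signs of $\alpha_\pm$: the relaxation's objective value at $(\tilde x(\alpha_\pm),t^*)$ differs from that at $(x^*,t^*)$ by exactly $\alpha_\pm\, c^Tx_0$, and if $c^Tx_0\neq 0$ then one of these two shifts would be strictly negative, contradicting the optimality of $(x^*,t^*)$. Hence $c^Tx_0=0$, and $\tilde x(\alpha_+)$ is feasible for $(\text{p-}\mathrm{RS}_m)$---its squared norm equals $t^*$ and the linear constraints are preserved---with $(\text{p-}\mathrm{RS}_m)$ objective equal to the relaxation's optimal value. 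Combined with the trivial inequality $v(\text{relaxation})\leq v(\text{p-}\mathrm{RS}_m)$, this forces equality and establishes the stated one-to-one correspondence between the global optimizers of the two problems.
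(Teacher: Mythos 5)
Your proposal is correct and follows essentially the same route as the paper, which proves this corollary simply by invoking the decomposition $Q=(Q-\lambda_{\min}(Q)I_{n})+\lambda_{\min}(Q)I_{n}$ and the perturbation argument of Theorem~\ref{mainthm2} (choose a nonzero $x_0$ in $\left({\rm span}\{a_1,\ldots,a_m\}\right)^{\bot}\cap\mathcal N(Q-\lambda_{\min}(Q)I_n)$ and move to a root of the resulting scalar quadratic to activate the second-order cone constraint). Your explicit treatment of the linear term --- using the opposite signs of the two roots together with optimality to force $c^Tx_0=0$ --- is a welcome refinement of a step the paper's template proof passes over silently, and the argument otherwise matches the paper's.
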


\section{Approximation Algorithm for Convex-Constrained (UQ$_+$)}

As shown in Theorem \ref{thm4.2}, $v({\rm UQ_+})=v({\rm SOCP})$ under Assumption (\ref{as:3}). Suppose (\ref{as:3}) does not hold, it is not difficult to find examples satisfying $v({\rm UQ_+})<v({\rm SOCP})$, see \cite{Be07}. Thus,
it is natural to ask what is the quality of $v({\rm SOCP})$. In this section, we answer this question for the convex-constrained case of (UQ$_+$) where  $l_{i}=-\infty,~i=1,\ldots,p$. More precisely, a new approximation algorithm based on (SOCP) is proposed for solving (UQ$_+$).  It should be noted that the approximation algorithms for the general nonconvex quadratic optimization problem with ellipsoid constraints \cite{Tseng,XWX} work  well for (UQ$_+$). However, as we see below, for (UQ$_+$), the existing approximation bounds can be greatly improved. Moreover, the existing approximation algorithms \cite{Tseng,XWX} are based on SDP relaxation, while our new algorithm is based on SOCP relaxation.

\begin{thm}\label{mainthm3}
Under the assumption that the origin $0$ is in the interior of the feasible region of  (UQ$_+$), $d_0=0$, $l_{i}=-\infty$ and $u_{i}<+\infty$ for $i=1,\ldots,p$, we can find a feasible solution $x$ in polynomial time satisfying
\begin{equation}
v({\rm SOCP})\ge v({\rm UQ_+})\ge f_0(x)\geq \left(\frac{1-\gamma}{\sqrt{2}+\gamma}\right)^2\cdot v({\rm SOCP})\ge 0,\label{new:0}
\end{equation}
where $\gamma=\max_{i=1,\ldots,p}\frac{\|Q^{-\frac{1}{2}}b_{i}\|}{\sqrt{u_{i}-d_{i}+\|Q^{-\frac{1}{2}}b_{i}\|^2}}$.
\end{thm}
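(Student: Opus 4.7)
My plan is to construct an explicit rounding of the (SOCP) optimum. After the invertible change of variables $y=Q^{1/2}x$, $c_i=Q^{-1/2}b_i$, the constraint $f_i(x)\le u_i$ becomes $\|y+c_i\|^2\le R_i^2$ with $R_i^2:=u_i-d_i+\|c_i\|^2$, so $\gamma=\max_i\|c_i\|/R_i$; in these coordinates (SOCP) reads $\max\{t+2c_0^Ty:~t+2c_i^Ty\le R_i^2-\|c_i\|^2,~\|y\|^2\le t\}$. The first step is to solve (SOCP) in polynomial time, obtaining an optimum $(y^*,t^*)$.

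Two preliminary facts drive the rest of the proof. First, $y^*$ is already feasible for $({\rm UQ}_+)$, since adding $\|y^*\|^2\le t^*$ to $t^*+2c_i^Ty^*\le R_i^2-\|c_i\|^2$ gives $\|y^*+c_i\|^2\le R_i^2$. Second, combining $t^*\le R_i^2-\|c_i\|^2-2c_i^Ty^*\le R_i^2-\|c_i\|^2+2\|c_i\|\|y^*\|$ with $\|y^*\|\le R_i+\|c_i\|$ (from feasibility) yields the key estimate $\sqrt{t^*}\le R_i+\|c_i\|\le R_i(1+\gamma)$ for every $i$.

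The algorithm then returns the better of the two candidates $\bar{y}_{\pm}=\pm\alpha y^*$, where $\alpha:=\theta\sqrt{t^*}/\|y^*\|$ and $\theta:=(1-\gamma)/(\sqrt{2}+\gamma)$ (assuming $y^*\ne 0$; the degenerate case is handled separately by choosing a unit direction). Feasibility of $\bar{y}_{\pm}$ follows from $\|\bar{y}_{\pm}+c_i\|\le\alpha\|y^*\|+\|c_i\|=\theta\sqrt{t^*}+\|c_i\|\le\theta(R_i+\|c_i\|)+\|c_i\|\le R_i$; the final inequality rearranges to $\theta\le(R_i-\|c_i\|)/(R_i+\|c_i\|)$, and since $(R_i-\|c_i\|)/(R_i+\|c_i\|)\ge(1-\gamma)/(1+\gamma)\ge(1-\gamma)/(\sqrt{2}+\gamma)=\theta$, this holds uniformly in $i$.

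For the ratio, set $\mu:=\sqrt{t^*}/\|y^*\|\ge 1$ and compute $\max\{f_0(\bar{y}_+),f_0(\bar{y}_-)\}=\theta^2 t^*+2\theta\mu|c_0^Ty^*|$; compared with $\theta^2\cdot v({\rm SOCP})=\theta^2 t^*+2\theta^2 c_0^Ty^*$, the residual $2\theta(\mu|c_0^Ty^*|-\theta c_0^Ty^*)$ is nonnegative in both signs of $c_0^Ty^*$ because $\mu\ge 1>\theta$. Transforming back through $x=Q^{-1/2}y$ delivers the claimed $f_0(x)\ge\theta^2 v({\rm SOCP})$, and the nonnegativity $v({\rm SOCP})\ge 0$ follows from feasibility of the origin. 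The main technical obstacle is the edge case $y^*=0$, where one must exhibit a direction $e$ such that $\theta\sqrt{t^*}\,e$ is feasible with objective at least $\theta^2 t^*=\theta^2 v({\rm SOCP})$; this is resolved using the tighter estimate $\sqrt{t^*}\le R_i$ that holds when $y^*=0$, so the triangle inequality alone suffices.
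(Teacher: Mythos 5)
Your proof is correct, but it takes a genuinely different route from the paper's. The paper works with the SOCP optimum $(x^*,t^*)$ and, when $x^{*T}Qx^*<t^*$, performs a rank-one-decomposition-style splitting: it picks $y$ with $x^{*T}Qx^*+y^TQy=t^*$, finds $\alpha$ so that $x^*+\alpha y$ attains the objective value $v({\rm SOCP})$ exactly, forms the two orthogonal-like combinations $s_1=(x^*+\alpha y)/\sqrt{1+\alpha^2}$ and $s_2=(\alpha x^*-y)/\sqrt{1+\alpha^2}$, shows that one of the normalized points $s_{\overline{j}}/t_{\overline{j}}$ violates each ellipsoid constraint by at most a factor $\sqrt{2}$ while carrying the full SOCP value, and then shrinks by $\overline{\tau}\ge(1-\gamma)/(\sqrt{2}+\gamma)$ to restore feasibility. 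You instead bypass the two-point decomposition entirely: you bound $\sqrt{t^*}\le R_i+\|c_i\|$ directly from feasibility, rescale the $x$-part of the SOCP optimum to $Q$-norm $\theta\sqrt{t^*}$, and use the sign flip together with $\mu=\sqrt{t^*}/\|y^*\|\ge 1$ to recover the linear term. All the steps check out (including the $y^*=0$ degenerate case), and your argument is shorter and more elementary. It is worth noting that your feasibility step only requires $\theta\le(R_i-\|c_i\|)/(R_i+\|c_i\|)\ge(1-\gamma)/(1+\gamma)$, and the ratio step only needs $\mu\ge\theta$, so your construction in fact establishes the sharper constant $\bigl((1-\gamma)/(1+\gamma)\bigr)^2$ rather than the stated $\bigl((1-\gamma)/(\sqrt{2}+\gamma)\bigr)^2$; the paper's route, by contrast, genuinely incurs the $\sqrt{2}$ because its intermediate point may violate the constraints by that factor. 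The one thing the paper's decomposition buys is that its intermediate candidate attains $v({\rm SOCP})$ exactly before shrinking, which connects more directly to the SDP-rounding analyses of Tseng and of Hsia et al.\ that the paper is improving upon, but for the bound claimed in the theorem your argument suffices and is cleaner.
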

\begin{proof}
The assumptions $0$ is in the interior of the feasible region and $d_0=0$ imply that $v({\rm SOCP})\ge v({\rm UQ_+})\ge 0$. Under the assumption $u_{i}<+\infty$ for $i=1,\ldots,p$, the feasible region of (SOCP) is compact and hence $v({\rm SOCP})<+\infty$. Let  $(x^*,t^*)$ be an optimal solution of (SOCP). It is sufficient to
assume ${x^*}^TQx^*<t^*$ since otherwise, $v({\rm UQ_+})=v({\rm SOCP})$.
Now we have
\[
t^*+2b_{0}^Tx^*=v({\rm SOCP}),~
t^*+2b_{i}^Tx^*\leq u_{i}-d_{i},~
x^{*T}Qx^*< t^*,~i=1,\ldots,p.
\]
Let $y\in \Bbb R^n$ satisfy
\[
x^{*T}Qx^*+y^TQy=t^*.
\]
Then we have  $y\neq 0$, $y^TQy>0$ and
\begin{eqnarray}
\left(\alpha^2+1\right)\left(x^{*T}Qx^*+y^TQy+2b_{0}^Tx^*\right)=\left(\alpha^2+1\right)
\cdot v({\rm SOCP}).\label{m1}
\end{eqnarray}
Since ${x^*}^TQx^*+2b_{0}^Tx^*< v({\rm SOCP})$ and $y^TQy>0$,  there exits a real value $\alpha>0$ such that
\begin{eqnarray}
(x^*+\alpha y)^TQ(x^*+\alpha y)+2b_{0}^T(x^*+\alpha y)=v({\rm SOCP}).\label{m2}
\end{eqnarray}
The equation (\ref{m1}) minus the equation (\ref{m2}) equals
\begin{eqnarray}
&&(\alpha x^*-y)^TQ(\alpha x^*-y)+2\alpha b_{0}^T(\alpha x^*-y)
=\alpha^2\cdot v({\rm SOCP}).\label{m4}
\end{eqnarray}
Define
\[
s_{1}=\frac{x^*+\alpha y}{\sqrt{1+\alpha^2}},~s_{2}=\frac{\alpha x^*-y}{\sqrt{1+\alpha^2}}.
\]
Then (\ref{m2}) and (\ref{m4}) can be recast as
\begin{eqnarray}
&&s_{1}^TQs_{1}+\frac{2b_{0}^Ts_{1}}{\sqrt{1+\alpha^2}}=\frac{v({\rm SOCP})}{1+\alpha^2},\label{n1}\\
&&s_{2}^TQs_{2}+\frac{2\alpha b_{0}^Ts_{2}}{\sqrt{1+\alpha^2}}=\frac{\alpha^2v({\rm SOCP})}{1+\alpha^2}.\label{n2}
\end{eqnarray}
Since
\[
s_{1}^TQs_{1}+s_{2}^TQs_{2}=t^*,~ x^*=\frac{s_{1}}{\sqrt{1+\alpha^2}}+\frac{\alpha s_{2}}{\sqrt{1+\alpha^2}},
\]
it follows from $t^*+2b_{i}^Tx^*\leq u_{i}-d_{i}$ that
\[
s_{1}^TQs_{1}+\frac{2b_{i}^Ts_{1}}{\sqrt{1+\alpha^2}}+s_{2}^TQs_{2}+\frac{2\alpha b_{i}^Ts_{2}}{\sqrt{1+\alpha^2}}\leq u_{i}-d_{i},~i=1,\ldots,p,
\]
or equivalently,
\begin{eqnarray*}
\frac{\bigg\|Q^{\frac{1}{2}}s_{1}+\frac{Q^{-\frac{1}{2}}b_{i}}{\sqrt{1+\alpha^2}}\bigg\|^2}{u_{i}-d_{i}+\|Q^{-\frac{1}{2}}b_{i}\|^2}+\frac{\bigg\|Q^{\frac{1}{2}}s_{2}+
\frac{\alpha Q^{-\frac{1}{2}}b_{i}}{\sqrt{1+\alpha^2}}\bigg\|^2}{u_{i}-d_{i}+\|Q^{-\frac{1}{2}}b_{i}\|^2}\leq 1, ~i=1,\ldots,p.
\end{eqnarray*}
Then we have
\begin{eqnarray*}
&\min& \left\{ \max_{i=1,\ldots,p}\frac{\left(1+\alpha^2\right)\bigg\|Q^{\frac{1}{2}}s_{1}+\frac{Q^{-\frac{1}{2}}b_{i}}{\sqrt{1+\alpha^2}}\bigg\|^2}{u_{i}-d_{i}+\|Q^{-\frac{1}{2}}b_{i}\|^2},
\max_{i=1,\ldots,p}\frac{\left(\frac{1+\alpha^2}{\alpha^2}\right)\bigg\|Q^{\frac{1}{2}}s_{2}+\frac{\alpha Q^{-\frac{1}{2}}b_{i}}{\sqrt{1+\alpha^2}}\bigg\|^2}{u_{i}-d_{i}+\|Q^{-\frac{1}{2}}b_{i}\|^2}\right\}\\
&\le&
\min  \left\{  1+\alpha^2,~\frac{1+\alpha^2}{\alpha^2}\right\}\\
&=&
\left\{\begin{array}{ll}1+\alpha^2,
& {\rm if}~|\alpha|\leq 1,\\
1+\frac{1}{\alpha^2},
& {\rm otherwise},\end{array}\right.\\
&\leq&2.
\end{eqnarray*}
Therefore, there is an index $\overline{j}\in \{1,2\}$ such that
\begin{equation}
\frac{\|Q^{\frac{1}{2}}s_{\overline{j}}/t_{\overline{j}}+Q^{-\frac{1}{2}}b_{i}\|}{\sqrt{u_{i}-d_{i}+\|Q^{-\frac{1}{2}}b_{i}\|^2}}
\le \sqrt{2},~i=1,\ldots,p.\label{16}
\end{equation}
where $t_{1}=\frac{1}{\sqrt{1+\alpha^2}}$, $t_{2}=\frac{\alpha}{\sqrt{1+\alpha^2}}$.
Define
\begin{eqnarray*}
\overline{x}&:=&\left\{\begin{array}{ll}s_{\overline{j}}/t_{\overline{j}},
& {\rm if}~b_{0}^Ts_{\overline{j}}/t_{\overline{j}}\geq 0,\\
-s_{\overline{j}}/t_{\overline{j}},
& {\rm otherwise},\end{array}\right.\\
\overline{\tau}&:=&\max\bigg\{\tau\in[0,1]:~f_i(\tau \overline{x})\le u_i,~i=1,\ldots,p\bigg\}\\
&~=&\max\bigg\{\tau\in[0,1]:~\frac{\|\tau Q^{\frac{1}{2}}\overline{x}+Q^{-\frac{1}{2}}b_{i}\|}{\sqrt{u_{i}-d_{i}+\|Q^{-\frac{1}{2}}b_{i}\|^2}}\le 1,~i=1,\ldots,p\bigg\}.
\end{eqnarray*}
According to \eqref{16},
it holds that
\begin{eqnarray*}
&&\frac{\|Q^{\frac{1}{2}}\overline{x}+Q^{-\frac{1}{2}}b_{i}\|}{\sqrt{u_{i}-d_{i}+\|Q^{-\frac{1}{2}}b_{i}\|^2}}
\\&&\le~~ \max\left\{
\frac{\|Q^{\frac{1}{2}}s_{\overline{j}}/t_{\overline{j}}+Q^{-\frac{1}{2}}b_{i}\|}{\sqrt{u_{i}-d_{i}+\|Q^{-\frac{1}{2}}b_{i}\|^2}},
\frac{\|-(Q^{\frac{1}{2}}s_{\overline{j}}/t_{\overline{j}}+Q^{-\frac{1}{2}}b_{i})+2Q^{-\frac{1}{2}}b_{i}\|}{\sqrt{u_{i}-d_{i}+\|Q^{-\frac{1}{2}}b_{i}\|^2}}
\right\}\\
&&\le~~\max\left\{\sqrt{2},~ \sqrt{2}+\frac{2\|Q^{-\frac{1}{2}}b_{i}\|}{\sqrt{u_{i}-d_{i}+\|Q^{-\frac{1}{2}}b_{i}\|^2}}
\right\}\\
&&=~~\sqrt{2}+\frac{2\|Q^{-\frac{1}{2}}b_{i}\|}{\sqrt{u_{i}-d_{i}+\|Q^{-\frac{1}{2}}b_{i}\|^2}}.
\end{eqnarray*}
Therefore, for any $\tau\in[0,1]$, we obtain
\begin{eqnarray*}
&&\frac{\|\tau Q^{\frac{1}{2}}\overline{x}+Q^{-\frac{1}{2}}b_{i}\|}{\sqrt{u_{i}-d_{i}+\|Q^{-\frac{1}{2}}b_{i}\|^2}}
\\&&=~~\frac{\|\tau (Q^{\frac{1}{2}}\overline{x}+Q^{-\frac{1}{2}}b_{i})+(1-\tau)Q^{-\frac{1}{2}}b_{i}\|}{\sqrt{u_{i}-d_{i}+\|Q^{-\frac{1}{2}}b_{i}\|^2}}
\\
&&\leq~~\tau\left(\sqrt{2}+\frac{2\|Q^{-\frac{1}{2}}b_{i}\|}{\sqrt{u_{i}-d_{i}+\|Q^{-\frac{1}{2}}b_{i}\|^2}}\right)+(1-\tau)\frac{\|Q^{-\frac{1}{2}}b_{i}\|}{\sqrt{u_{i}-d_{i}+\|Q^{-\frac{1}{2}}b_{i}\|^2}}.
\end{eqnarray*}
Since the origin $0$ is in the interior of the feasible region of  (UQ$_+$), we have $u_i-d_i>0$ for $i=1,\ldots,p$. It follows that $\frac{\|Q^{-\frac{1}{2}}b_{i}\|}{\sqrt{u_{i}-d_{i}+\|Q^{-\frac{1}{2}}b_{i}\|^2}}<1$.
Now, for $i=1,\ldots,p$, we have
\[
\frac{\|\tau Q^{\frac{1}{2}}\overline{x}+Q^{-\frac{1}{2}}b_{i}\|}{\sqrt{u_{i}-d_{i}+\|Q^{-\frac{1}{2}}b_{i}\|^2}}\leq 1,
\]
or equivalently, $f_i(\tau\overline{x})\le u_i$,
as long as
\[
\tau\leq \left(1-\frac{\|Q^{-\frac{1}{2}}b_{i}\|}{\sqrt{u_{i}-d_{i}+\|Q^{-\frac{1}{2}}b_{i}\|^2}}\right)\bigg/\left(\sqrt{2}+\frac{\|Q^{-\frac{1}{2}}b_{i}\|}{\sqrt{u_{i}-d_{i}+\|Q^{-\frac{1}{2}}b_{i}\|^2}}\right).
\]
According to the definition of $\overline{\tau}$, we obtain
\begin{eqnarray*}
\overline{\tau}&\geq&\min_{i=1,\ldots,p}{\left(1-\frac{\|Q^{-\frac{1}{2}}b_{i}\|}{\sqrt{u_{i}-d_{i}+\|Q^{-\frac{1}{2}}b_{i}\|^2}}\right)\bigg/
\left(\sqrt{2}+\frac{\|Q^{-\frac{1}{2}}b_{i}\|}{\sqrt{u_{i}-d_{i}+\|Q^{-\frac{1}{2}}b_{i}\|^2}}\right)}\\
&=&\frac{1-\max_{i=1,\ldots,p}\frac{\|Q^{-\frac{1}{2}}b_{i}\|}{\sqrt{u_{i}-d_{i}+\|Q^{-\frac{1}{2}}b_{i}\|^2}}}{\sqrt{2}+\max_{i=1,\ldots,p}\frac{\|Q^{-\frac{1}{2}}b_{i}\|}{\sqrt{u_{i}-d_{i}+\|Q^{-\frac{1}{2}}b_{i}\|^2}}},
\end{eqnarray*}
where the equality holds as $h(\gamma)=(1-\gamma)/(\sqrt{2}+\gamma)$ is a decreasing function for
$\gamma\in [0,1)$.

Now, we conclude that
\begin{align}
f_{0}(\overline{\tau}\overline{x}) =&\overline{\tau}^2\overline{x}^TQ\overline{x}+2\overline{\tau} b_{0}^T\overline{x}\nonumber\\
 \geq &\overline{\tau}^2\overline{x}^TQ\overline{x}+2\overline{\tau}^2 b_{0}^T\overline{x}\label{add0}\\
 \geq &\overline{\tau}^2\overline{x}^TQ\overline{x}+2\overline{\tau}^2 b_{0}^Ts_{\overline{j}}/t_{\overline{j}}\label{add1}\\
 =& \overline{\tau}^2({s_{\overline{j}}}^TQs_{\overline{j}}+2t_{\overline{j}}b_{0}^Ts_{\overline{j}})/{t_{\overline{j}}}^2\nonumber\\
 = &\overline{\tau}^2\cdot v({\rm SOCP})\label{add2},
\end{align}
where the inequality \eqref{add0} follows since $b_0^T\overline{x}\ge 0$ and $\overline{\tau}\geq \overline{\tau}^2$ ($\overline{\tau}\in[0,1]$), the inequality \eqref{add1} holds as $b_0^T\overline{x}\geq b_{0}^Ts_{\overline{j}}/t_{\overline{j}}$,
and the equality \eqref{add2} follows from (\ref {n1}) and (\ref{n2}).
\end{proof}
\begin{remark}
To satisfy the assumption that $0$ is in the interior of the feasible region of  (UQ$_+$), it is sufficient to find any one of the interior points  of (UQ$_+$) and then translate the origin  there.
\end{remark}

\section{Approximation Algorithm for Finding the Chebyshev Center of the Intersection of Balls}

As an application of (UQ$_+$), Beck \cite{Be07,Be} studied the problem finding the Chebyshev center of the intersection of given balls, i.e., the min-max problem (CC) (\ref{Cheyb}).
Replacing the inner maximization problem with its Lagrangian dual, which is the following minimization SDP problem:
\begin{eqnarray*}
({\rm SDP}(z))~~&\min ~~& t\\
&{\rm s.~t.}& \left(\begin{array}{cc}(-1+\sum_{i=1}^p\lambda_{i})I_{n}&z-\sum_{i=1}^p\lambda_{i}a_{i}\\
z^T-\sum_{i=1}^p\lambda_{i}a_{i}^T&t+\sum_{i=1}^p\lambda_{i}(\|a_{i}\|^2-r_{i}^2)
\end{array}\right)\succeq 0,\\
&&\lambda_{i}\in \Bbb R_{+},~i=1,\ldots,p,
\end{eqnarray*}
Beck \cite{Be07} reduced the min-max problem (CC) to a double minimization problem, which turns out to be convex programming problem:
\begin{eqnarray*}
({\rm DCC})~~&&\min_{z}~ v({\rm SDP}(z))+\|z\|^2 \\
&=&\min_{t,\lambda,z} ~ t+\|z\|^2\\
&&{\rm s.~t.}~ \left(\begin{array}{cc}(-1+\sum_{i=1}^p\lambda_{i})I_{n}&z-\sum_{i=1}^p\lambda_{i}a_{i}\\
z^T-\sum_{i=1}^p\lambda_{i}a_{i}^T&t+\sum_{i=1}^p\lambda_{i}(\|a_{i}\|^2-r_{i}^2)
\end{array}\right)\succeq 0,\\
&&\lambda_{i}\in \Bbb R_{+},~i=1,\ldots,p.
\end{eqnarray*}
Moreover, by noting that the optimal objective value of above convex SDP problem can be attained at a solution $(t^*,\lambda^*,z^*)$ satisfying $\sum_{i=1}^p\lambda_{i}^*=1$ and $z^*=\sum_{i=1}^p\lambda^*_{i}a_{i}$, Beck \cite{Be07} showed that  (DCC) is further equivalent to the following standard convex quadratic programming problem:
\begin{eqnarray*}
&\min_{\lambda} & \sum_{i=1}^p\lambda_{i}(r_{i}^2-\|a_{i}\|^2)+\left\|\sum_{i=1}^p\lambda_{i}a_{i}\right\|^2\\
&{\rm s.~t.}&
 \sum_{i=1}^p\lambda_{i}=1,~\lambda_{i}\ge 0,~i=1,\ldots,p.
\end{eqnarray*}
Based on the strong duality theory for (UQ$_+$), Beck \cite{Be} showed $v({\rm CC})=v({\rm DCC})$ when $p\leq n$. For the hard case $p>n$, solving (DCC) always yields a heuristic Chebyshev center $\overline{z}$.
To our knowledge, it remains unknown what is the quality of $v({\rm DCC})$ and $\overline{z}$.
In this section, we establish the first approximation ratio between $v({\rm CC})$ and $v({\rm DCC})$. Moreover, the quality of the solution $\overline{z}$ returned by Beck's approach is also guaranteed.

\begin{thm}
Under the assumption that ${\rm int}(\Omega)\neq \emptyset$, we can find a solution $\overline{z}$ in polynomial time satisfying
\begin{eqnarray}
v({\rm DCC})\ge \max_{x\in \Omega}\|x-\overline{z}\|^2 \ge v({\rm CC})
\ge
\left(\frac{1-\gamma}{\sqrt{2}+\gamma}\right)^2\cdot v({\rm DCC}),\label{ratio}
\end{eqnarray}
where $\gamma$ is equal to the optimal objective value of the following convex programming problem:
\begin{equation}
\min_{x\in\Bbb R^n}\max_{i=1,\ldots,p}\frac{\|x-a_{i}\|}{r_{i}}. \label{gam0}
\end{equation}
Moreover, let $d_{\max}=\max_{i,j=1,\ldots,p}\|a_{i}-a_{j}\|$ and $r_{\min}=\min_{i=1,\ldots,p}r_i$, we have
\begin{equation}
\gamma\le \sqrt{\frac{n}{2(n+1)}}\cdot \frac{d_{\max} }{r_{\min}}
<\frac{d_{\max} }{\sqrt{2}~ r_{\min}}.
\label{gam1}
\end{equation}
\end{thm}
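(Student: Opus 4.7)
\medskip\noindent\textbf{Proof proposal.} The plan is to let $\overline z$ denote an optimal solution of (DCC) (a convex quadratic program, computable in polynomial time) and split the chain \eqref{ratio} into its three inequalities. Let $v_z^{\text{SOCP}}$ denote the SOCP relaxation value of the inner problem $\max_{x \in \Omega}\|x - z\|^2$. Beck's reduction together with the SOCP--SDP equivalence in Theorem \ref{thm4.2} gives $v(\text{DCC}) = \min_z v_z^{\text{SOCP}} = v_{\overline z}^{\text{SOCP}}$; since SOCP is a relaxation, $v_{\overline z}^{\text{SOCP}} \ge \max_{x\in\Omega}\|x - \overline z\|^2$, which is in turn $\ge \min_z\max_{x\in\Omega}\|x - z\|^2 = v(\text{CC})$. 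This settles the two leftmost inequalities of \eqref{ratio}.

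For the approximation bound $v(\text{CC}) \ge \alpha \cdot v(\text{DCC})$, where $\alpha := ((1-\gamma)/(\sqrt{2}+\gamma))^2$, I would first solve the convex program \eqref{gam0} to obtain its minimizer $x^*$ and optimal value $\gamma$. Since $\text{int}(\Omega) \neq \emptyset$, one has $\gamma < 1$, and $\|x^* - a_i\| \le \gamma r_i < r_i$ shows $x^* \in \text{int}(\Omega)$. Translating the origin to $x^*$ preserves $v(\text{CC})$, $v(\text{DCC})$, $\Omega$, and the value of $\gamma$, and now places $0$ in $\text{int}(\Omega)$. For each fixed $z$, decompose $\|x - z\|^2 = (x^T x - 2 z^T x) + \|z\|^2$ and apply Theorem \ref{mainthm3} to the $d_0 = 0$ component $g_0(x) := x^T x - 2 z^T x$; the relevant $\gamma$-parameter of Theorem \ref{mainthm3} equals $\max_i \|a_i - x^*\|/r_i = \gamma$, and since it depends only on the constraint data and not on the objective vector $b_0 = -z$, the ratio $\alpha$ is uniform in $z$. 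Theorem \ref{mainthm3} then produces $x_z \in \Omega$ with $g_0(x_z) \ge \alpha(v_z^{\text{SOCP}} - \|z\|^2)$; adding $\|z\|^2 \ge 0$ and using $\alpha \in (0,1)$ gives $\max_{x\in\Omega}\|x - z\|^2 \ge \|x_z - z\|^2 \ge \alpha \cdot v_z^{\text{SOCP}}$ for every $z$. Taking $\min_z$ on both sides yields $v(\text{CC}) \ge \alpha \cdot v(\text{DCC})$, completing \eqref{ratio}.

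Finally, for \eqref{gam1}, observe $\gamma \le (1/r_{\min})\min_x\max_i\|x - a_i\|$, and the minimum on the right is the circumradius of the finite set $\{a_1,\ldots,a_p\} \subset \Bbb R^n$; Jung's classical theorem bounds this circumradius by $d_{\max}\sqrt{n/(2(n+1))}$, while $n/(2(n+1)) < 1/2$ yields the strict second part of \eqref{gam1}. The main obstacle I anticipate is ensuring that the approximation ratio supplied by Theorem \ref{mainthm3} is uniform over the parameter $z$ so that it survives the outer minimization; the key observation, as noted above, is that $\alpha$ depends only on the $a_i$'s and $r_i$'s (not on $b_0 = -z$), so the pointwise bound $v_z \ge \alpha\, v_z^{\text{SOCP}}$ can be passed through $\min_z$ cleanly.
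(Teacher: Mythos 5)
Your proposal is correct and follows essentially the same route as the paper: both use $v(\mathrm{SOCP}(z))=v(\mathrm{SDP}(z))$ from Theorem~\ref{thm4.2} to get the two leftmost inequalities, apply Theorem~\ref{mainthm3} pointwise in $z$ after shifting the origin to an interior point (your direct choice of the minimizer $x^*$ of (\ref{gam0}) is a slightly cleaner packaging of the paper's ``prove it for every $x_0\in\mathrm{int}(\Omega)$, then take the infimum over $x_0$'' step), and invoke Jung's theorem for (\ref{gam1}). The one step you assert without justification is that translating the origin preserves $v(\mathrm{DCC})$ (equivalently, that the SOCP value of the inner problem plus $\|z\|^2$ is translation-invariant, which is not automatic because the lifted variable $t$ mixes with the linear terms under a shift); the paper verifies this explicitly via the substitution $(x,t)\mapsto(x-x_0,\;t-2x_0^Tx+\|x_0\|^2)$, a routine check that your write-up should include.
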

\begin{proof}
According to Theorem \ref{thm4.2},  $v({\rm SDP}(z))=v({\rm SOCP}(z))$, where ${\rm SOCP}(z)$ is the second-order cone programming relaxation proposed in Section 2:
\begin{eqnarray*}
{\rm SOCP}(z)~~&\max& t-2z^Tx\\
&{\rm s.~t.}&  t-2a_{i}^Tx+\|a_{i}\|^2\leq {r_{i}}^2,~i=1,\ldots,p,\\
&& \left\|\left(\begin{array}{c} x\\ \frac{t-1}{2}\end{array}\right)\right\| \le  \frac{t+1}{2}.
\end{eqnarray*}
Suppose $0\in {\rm int}(\Omega)$, according to Theorem \ref{mainthm3}, for any $z\in\Bbb R^n$, we have
\[
v({\rm SDP}(z))\ge
 \max_{x\in \Omega}\{\|x\|^2-2x^Tz\}\ge \tau^2\cdot v({\rm SDP}(z)),
\]
where \[
\tau=\frac{1-\max_{i=1,\ldots,p}\frac{\|a_{i}\|}{r_{i}}}{\sqrt{2}+\max_{i=1,\ldots,p}\frac{\|a_{i}\|}
 {r_{i}}}.
 \]
Since $0\in {\rm int}(\Omega)$, we have $\|a_{i}\|<r_{i}$ for $i=1,\ldots,p$. That is, $\tau>0$. Notice that it is trivial to see
 $\tau<1$. Therefore, $\tau^2<1$ and then it holds that
\begin{eqnarray*}
\min_{z}\{v({\rm SDP}(z))+\|z\|^2\}\ge \min_{z}\max_{x\in \Omega}\|x-z\|^2 \ge
\tau^2\min_{z}\{v({\rm SDP}(z))+\|z\|^2\},
\end{eqnarray*}
or equivalently,
\begin{equation}
v({\rm DCC}) \ge v({\rm CC})
\ge \tau^2\cdot v({\rm DCC}).\label{e:1}
\end{equation}
According to the definition of $({\rm SOCP}(z))$, we have
\[
v({\rm SOCP}(z)) \ge \|x\|^2-2z^Tx,~\forall~x\in\Omega.
\]
Let $\overline{z}$ be an optimal solution of (DCC). Since $v({\rm SDP}(z))=v({\rm SOCP}(z))$, it holds that
\begin{equation}
v({\rm SDP}(\overline{z}))+\|\overline{z}\|^2\ge \|x-\overline{z}\|^2,~\forall~x\in\Omega.\label{e:2}
\end{equation}
Therefore, combining (\ref{e:1}) and (\ref{e:2}) yields
\begin{equation}
v({\rm DCC})\ge \max_{x\in \Omega}\|x-\overline{z}\|^2 \ge v({\rm CC})\ge \tau^2\cdot v({\rm DCC}).\label{e:3}
\end{equation}
Let $x_0$ be any interior point of $\Omega$, i.e., $ \|x_0-a_{i}\|< r_{i}$, $i=1,\ldots, p$.
Define
\begin{eqnarray*}
({\rm CC'})~~\min_{z}\max_{\widetilde{x}\in \Omega(x_0)}\|\widetilde{x}-(z-x_0)\|^2,
\end{eqnarray*}
where
$\Omega(x_0)=\{\widetilde{x}\in \Bbb R^n\mid~ \|\widetilde{x}-(a_{i}-x_0)\|^2\leq r_{i}^2, ~i=1,\ldots, p\}$. Then it is trivial to see that $0\in {\rm int}(\Omega(x_0))$ and $v({\rm CC})=v({\rm CC'})$.

We write the second-order cone programming relaxation for the inner maximization problem of $({\rm CC'})$ as follows:
\begin{eqnarray*}
{\rm SOCP'}(z)~~&\max& \widetilde{t}-2(z-x_0)^T\widetilde{x}\\
&{\rm s.~t.}&  \widetilde{t}-2(a_{i}-x_0)^T\widetilde{x}+\|a_{i}-x_0\|^2\leq {r_{i}}^2,~i=1,\ldots,p,\\
&&  \left\|\left(\begin{array}{c} \widetilde{x}\\ \frac{\widetilde{t}-1}{2}\end{array}\right)\right\| \le  \frac{\widetilde{t}+1}{2}.
\end{eqnarray*}
Let $(x^*,t^*)$ be an optimal solution of ${\rm SOCP}(z)$. Define
\[
\widetilde{x}:=x^*-x_0,~\widetilde{t}:=t^*-2x_0^Tx^*+x_0^Tx_0.
\]
We can verify  that $(\widetilde{x},\widetilde{t})$ is a feasible solution of ${\rm SOCP'}(z)$ and hence
\begin{equation}
v({\rm SOCP'}(z))\geq \widetilde{t}-2(z-x_0)^T\widetilde{x}
=v({\rm SOCP}(z))+\|z\|^2-\|z-x_0\|^2.\label{e:4}
\end{equation}
On the other hand,
let $(\widetilde{x}^*,\widetilde{t}^*)$ be an optimal solution of ${\rm SOCP'}(z)$. Define
\[
x:=\widetilde{x}^*+x_0,~t:=\widetilde{t}^*+2x_0^T\widetilde{x}^*-x_0^Tx_0.
\]
We can verify  that $(x,t)$ is a feasible solution of ${\rm SOCP}(z)$ and hence
\begin{equation}
v({\rm SOCP}(z))\geq t-2z^Tx
=v({\rm SOCP'}(z))-\|z\|^2+\|z-x_0\|^2.\label{e:5}
\end{equation}
Combining (\ref{e:4}) and (\ref{e:5}) yields
\begin{equation}
v({\rm SOCP}(z))+\|z\|^2
=v({\rm SOCP'}(z))+\|z-x_0\|^2.\label{e:6}
\end{equation}
Denote by ${\rm SDP'}(z)$ the SDP relaxation  for the inner maximization problem of $({\rm CC'})$. According to Theorem \ref{thm4.2}, we have $v({\rm SDP'}(z))=v({\rm SOCP'}(z))$. Then, the equation (\ref{e:6}) becomes
\begin{equation}
v({\rm SDP}(z))+\|z\|^2
=v({\rm SDP'}(z))+\|z-x_0\|^2.\label{e:7}
\end{equation}
Similar to $({\rm DCC})$, we define
\[
({\rm DCC'})~~\min_{z}~ v({\rm SDP'}(z))+\|z-x_0\|^2.
\]
Then the equation (\ref{e:7}) implies that
\[
v({\rm DCC'})=v({\rm DCC}).
\]
Since $0\in {\rm int}(\Omega(x_0))$, according to (\ref{e:3}), we have
\[
v({\rm DCC'})\ge \max_{\widetilde{x}\in \Omega(x_0)}\|\widetilde{x}-(\widetilde{z}-x_0)\|^2 \ge v({\rm CC'})\ge \tau(x_0)^2\cdot v({\rm DCC'}),
\]
or equivalently,
\begin{equation}
v({\rm DCC})\ge \max_{x\in \Omega}\|x-\widetilde{z}\|^2 \ge v({\rm CC})\ge \tau(x_0)^2\cdot v({\rm DCC}), \label{e:m}
\end{equation}
where $\widetilde{z}$ is an optimal solution of $({\rm DCC'})$ and
\[
\tau(x_0)=\frac{1-\max_{i=1,\ldots,p}\frac{\|x_0-a_{i}\|}{r_{i}}}{\sqrt{2}+\max_{i=1,\ldots,p}\frac{\|x_0-a_{i}\|}
 {r_{i}}}.
\]
Since the inequality (\ref{e:m}) holds for any $x_0\in {\rm int}(\Omega)$, we can choose $x_0$ to maximize the lower bound $\tau(x_0)$ in ${\rm int}(\Omega)$, that is,
\[
\tau(x_0)^2=\left(\frac{1-\gamma}{\sqrt{2}+\gamma}\right)^2,
\]
where
\begin{eqnarray}
\gamma &=&\inf_{x_0\in {\rm int}(\Omega)}\max_{i=1,\ldots,p}\frac{\|x_0-a_{i}\|}{r_{i}}\nonumber\\
&=&\min_{x_0\in \Omega}\max_{i=1,\ldots,p}\frac{\|x_0-a_{i}\|}{r_{i}}.\label{gam}
\end{eqnarray}
Since $\|x_0-a_{i}\|\le r_i$ for any $x\in \Omega$, we have $\gamma\le 1$.  Let $x^*$ be an optimal solution of (\ref{gam0}).  Notice that the optimal objective value of (\ref{gam0}) provides a lower bound of the problem (\ref{gam}). Then, we have $\max_{i=1,\ldots,p}\frac{\|x^*-a_{i}\|}{r_{i}}\le 1$, that is, $x^*\in \Omega$. Therefore, the problems (\ref{gam0}) and (\ref{gam}) are actually equivalent.

According to the definition of $r_{\min}$, we obtain
\[
\gamma \le \min_{x_0\in \Bbb R^n}\max_{i=1,\ldots,p}\frac{\|x_0-a_{i}\|}{r_{\min}} =
\frac{1}{r_{\min}}\cdot\min_{x_0\in \Bbb R^n}\max_{i=1,\ldots,p}\|x_0-a_{i}\|.
\]
Since the objective function of the above inner maximization problem is concave with respect to $a_i$, we have
\[
\gamma\le \frac{1}{r_{\min}}\cdot\min_{x_0\in \Bbb R^n}\max_{a\in C}\|x_0-a\|
\]
where $C$ is the  convex hull of the points $a_{1},\ldots,a_{p}$.
According to Example 3.3.6 in \cite{DB}, we have
\[
\min_{x_0\in \Bbb R^n}\max_{a\in C}\|x_0-a\| \le d_{\max}\sqrt{\frac{n}{2(n+1)}}
\]
and then the proof is complete.
\end{proof}

\section{Conclusion}
In this paper, we have thoroughly studied the uniform quadratic optimization problem (UQ$_+$), which is a special quadratic constrained quadratic programming (QCQP) sharing the same positive definite Hessian matrix $Q$. We show that (UQ$_+$) is NP-hard when $p\ge n+2$, where $n$ and $p$ are the numbers of variables and constraints, respectively. We proposed a second-order cone programming (SOCP) relaxation for (UQ$_+$) and showed that it is tight under a new assumption, which is slightly weaker than the existing assumption $p\le n$.  Now the complexity of (UQ$_+$) remains unknown only when $p=n+1$. Then, we
extended the SOCP relaxation approach to (QCQP) and established a new sufficient condition under which (QCQP) is hidden convex. As further applications, this new condition not only generalized some well-known results (for example, the trust region subproblem and extensions,  (UQ) with positive semidefinite $Q$ and indefinite $Q$), but also partially gave answers to a few open questions (for example, the max-min dispersion problem and the extended p-regularized subproblem). For convex constrained (UQ$_+$), we proposed an imroved approximation algorithm based on our SOCP relaxation. Moreover, the new approximation bound is independent of $p$ and $n$.
As a further application, we succeeded in establishing the first approximation analysis for Beck's convex approach to find the Chebyshev center of the intersection of $p$ balls. Again, the approximation bound dependents only  on the distribution of the given  balls rather than  $p$ and $n$.
The future work may include more applications of our sufficient condition for general (QCQP) and further improvement of our approximation analysis.

\end{document}